\theoremstyle{plain}
\newtheorem*{theorem*}{Theorem}
\newtheorem{theorem}{Theorem}[section]
\newtheorem{lemma}[theorem]{Lemma}
\newtheorem{claim}[theorem]{Claim}
\newtheorem{proposition}[theorem]{Proposition}
\newtheorem*{claim*}{Claim}
\theoremstyle{remark}
\def\N{\mathbb{N}}
\def\R{\mathbb{R}}
\def\P{\mathbb{P}}
\def\E{\mathbb{E}}
\def\C{\mathcal}
\def\Scr{\mathscr}
\DeclareMathOperator\V{Var}
\let\emptyset\varnothing
\let\eps\varepsilon
\let\originalleft\left
\let\originalright\right
\renewcommand{\left}{\mathopen{}\mathclose\bgroup\originalleft}
\renewcommand{\right}{\aftergroup\egroup\originalright}
\def\imod#1{\allowbreak\mkern10mu({\operator@font mod}\,\,#1)}
\def\NR{[n]^{(r)}}
\def\VV{\mathbf{V}}
\def\NN{\mathbf{N}}
\def\MM{\mathbf{M}}
\def\RR{\mathbf{R}}
\begin{document}

\title{Transference for the Erd\H{o}s--Ko--Rado theorem}

\author{J\'{o}zsef Balogh}
\address{Department of Mathematics, University of Illinois, 1409 W.\/ Green Street, Urbana IL 61801, USA; {\em and\/}
Bolyai Institute, University of Szeged, 6720 Szeged, Hungary}
\email{jobal@math.uiuc.edu}

\author{B\'{e}la Bollob\'{a}s}
\address{Department of Pure Mathematics and Mathematical Statistics, University of Cambridge, Wilberforce Road, Cambridge CB3\thinspace0WB, UK; {\em and\/}
Department of Mathematical Sciences, University of Memphis, Memphis TN 38152, USA; {\em and\/} London Institute for Mathematical Sciences, 35a South St., Mayfair, London W1K\thinspace2XF, UK.}
\email{b.bollobas@dpmms.cam.ac.uk}

\author{Bhargav Narayanan}
\address{Department of Pure Mathematics and Mathematical Statistics, University of Cambridge, Wilberforce Road, Cambridge CB3\thinspace0WB, UK}
\email{b.p.narayanan@dpmms.cam.ac.uk}

\date{12 October 2014}
\subjclass[2010]{Primary 05D05; Secondary 05C80, 05D40}

\begin{abstract}
For natural numbers $n,r \in \N$ with $n\ge r$, the Kneser graph $K(n,r)$ is the graph on the family of $r$-element subsets of $\{1,\dots,n\}$ in which two sets are adjacent if and only if they are disjoint. Delete the edges of $K(n,r)$ with some probability, independently of each other: is the independence number of this random graph equal to the independence number of the Kneser graph itself? We answer this question affirmatively as long as $r/n$ is bounded away from $1/2$, even when the probability of retaining an edge of the Kneser graph is quite small. This gives us a random analogue of the Erd\H{o}s--Ko--Rado theorem since an independent set in the Kneser graph is the same as a uniform intersecting family. To prove our main result, we give some new estimates for the number of disjoint pairs in a family in terms of its distance from an intersecting family; these might be of independent interest.
\end{abstract}

\maketitle

\section{Introduction}
Over the past twenty years, a great deal of work has gone into proving `sparse random' analogues of classical extremal results in combinatorics. Some of the early highlights include a version of Mantel's theorem for random graphs proved by Babai, Simonovits, and Spencer~\citep{Triangle1}, the Ramsey theoretic results of R{\"o}dl and Ruci{\'n}ski~\citep{Ramsey1, Ramsey2}, and a random analogue of Szemer\'edi's theorem due to Kohayakawa, {\L}uczak and R{\"o}dl~\citep{Roth2}. Very general transference theorems have since been proved by Conlon and Gowers~\citep{Conlon}, Schacht~\citep{Schacht}, Balogh, Morris and Samotij~\citep{cont1} and Saxton and Thomason~\citep{cont2}. The surveys of {\L}uczak~\citep{survey2} and R\"{o}dl and Schacht~\citep{survey1} provide a detailed account of such results.

In this paper, we shall be interested in proving such a transference result for a central result in extremal set theory, the Erd\H{o}s--Ko--Rado theorem. A family of sets $\C{A}$ is said to be \emph{intersecting} if $A \cap B \neq \emptyset$ for all $A, B \in \C{A}$. Writing $X^{(r)}$ for the family of all $r$-element subsets of a set $X$ and $[n]$ for the set $\{ 1,2,...,n\}$, a classical result of Erd\H{o}s, Ko and Rado~\citep{EKR} asserts that if $n > 2r$ and $\C{A} \subset \NR$ is intersecting, then $|\C{A}| \le \binom{n-1}{r-1}$ with equality if and only if $\C{A}$ is a \emph{star}. As is customary, we define the \emph{star centred at $x\in[n]$} to be the family of all the $r$-element subsets of $[n]$ containing $x$ and we call an intersecting family \emph{trivial} if it is contained in a star.

If $\C{A}\subset \NR$ is intersecting and has cardinality comparable to that of a star, must $\C{A}$ necessarily resemble a star? Such questions about the `stability' of the Erd\H{o}s--Ko--Rado theorem have received a great deal of attention. Perhaps the earliest stability result about the Erd\H{o}s--Ko--Rado theorem was proved by Hilton and Milner~\citep{Hilton} who determined how large a uniform intersecting family can be if one insists that the family is nontrivial. Furthering this line of research, Friedgut~\citep{Stab1}, Dinur and Friedgut~\citep{Stab2}, and Keevash and Mubayi~\citep{Stab3} have shown that every `large' uniform intersecting family is essentially trivial. Finally, let us mention that Balogh, Das, Delcourt, Liu and Sharifzadeh~\citep{typical} have recently shown, amongst other things, that almost all $r$-uniform intersecting families are trivial when $r < (n - 8\log n)/3$.

As stated earlier, our aim in this note to prove a transference result for the Erd\H{o}s--Ko--Rado theorem. The notion of stability we shall consider here was introduced by Bollob\'as, Narayanan and Raigorodskii~\citep{bnr} (see also~\citep{distance2}). To present this notion of stability, it will be helpful to consider $\NR$ in a different incarnation, as the {\em Kneser graph $K(n,r)$}. The Kneser graph $K(n,r)$ is the graph on $\NR$ where two vertices, i.e., $r$-element subsets of $[n]$, are adjacent if and only if they are disjoint. We shall freely switch between these two incarnations of $\NR$.
 
Observe that a family $\C{A} \subset \NR$ is intersecting if and only if $\C{A}$ induces an independent set in $K(n,r)$. Writing $\alpha (G)$ for the size of the largest independent set of a graph $G$, the Erd\H{o}s--Ko--Rado theorem asserts that $\alpha (K(n,r)) = \binom{n-1}{r-1}$ when $n > 2r$. Let $K_p(n,r)$ denote the random subgraph of $K(n,r)$ obtained by retaining each edge of $K(n,r)$ independently with probability $p$. Bollob\'as, Narayanan and Raigorodskii~\citep{bnr} asked the following natural question: is $\alpha(K_p(n,r)) = \binom{n-1}{r-1}$? They proved, when $r = r(n)= o(n^{1/3})$, that the answer to this question is in the affirmative even after practically all the edges of the Kneser graph have been deleted. More precisely, they showed that in this range, there exists a (very small) critical probability $p_c(n,r)$ with the following property: as $n \to \infty$, if $p/p_c > 1$, then with high probability, $\alpha(K_p(n,r)) = \binom{n-1}{r-1}$ and the only independent sets of this size in $K_p(n,r)$ are stars, while if $p/p_c<1$, then $\alpha(K_p(n,r)) > \binom{n-1}{r-1}$ with high probability.

Bollob\'as, Narayanan and Raigorodskii also asked what happens for larger values of $r$, and conjectured in particular that as long as $r/n$ is bounded away from $1/2$, such a random analogue of the Erd\H{o}s--Ko--Rado theorem should continue to hold for $K_p(n,r)$ for some $p$ bounded away from $1$. In this note, we shall prove this conjecture and a bit more.

\begin{theorem}\label{t:main}
For every $\eps>0$, there exist constants $c=c(\eps) > 0$ and $c'=c'(\eps)>0$ with $c < c'$ such that for all $n,r \in \N$ with $r \le (1/2 - \eps)n$,
\[
\P \left(\alpha (K_p(n,r)) = \binom{n-1}{r-1} \right) \to
\begin{cases}
   1 &\mbox{if } p \ge \binom{n-1}{r-1}^{-c}\\
   0 &\mbox{if } p \le \binom{n-1}{r-1}^{-c'}\\
   \end{cases}
\]
as $n \to \infty$. In particular, with high probability, $\alpha(K_{1/2}(n,r)) = \binom{n-1}{r-1}$.
\end{theorem}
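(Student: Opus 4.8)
The plan is to prove the two halves of the dichotomy separately. The lower threshold (the ``$0$'' branch, $p\le\binom{n-1}{r-1}^{-c'}$) is the soft part; the upper threshold (the ``$1$'' branch, $p\ge\binom{n-1}{r-1}^{-c}$) is where the estimates advertised in the abstract do their work.

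For the lower threshold, write $m=\binom{n-1}{r-1}$ and fix the star $\mathcal{S}_1=\{A\in\NR:1\in A\}$. Since $\mathcal{S}_1$ is still independent in $K_p(n,r)$, it suffices to produce, whp, a single $r$-set $A$ with $1\notin A$ all of whose edges to $\mathcal{S}_1$ have been deleted, for then $\mathcal{S}_1\cup\{A\}$ is an independent set of size $m+1$. Each such $A$ is disjoint from exactly $\binom{n-1-r}{r-1}$ members of $\mathcal{S}_1$, so if $\mathcal{T}$ is the random family of these extendable sets then $\E|\mathcal{T}|=\binom{n-1}{r}(1-p)^{\binom{n-1-r}{r-1}}$, which tends to infinity once $c'$ is chosen large enough in terms of $\eps$ (the relevant inequality is essentially $p\binom{n-1-r}{r-1}\ll\log\binom{n-1}{r}$). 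The key point is that for distinct $A,A'$ the events $\{A\in\mathcal{T}\}$ and $\{A'\in\mathcal{T}\}$ depend on disjoint sets of edges of $K(n,r)$ and are therefore independent; hence $\V|\mathcal{T}|\le\E|\mathcal{T}|$, and Chebyshev's inequality gives $\P(\mathcal{T}=\emptyset)\le 1/\E|\mathcal{T}|\to0$.

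For the upper threshold it is enough to show that whp $K_p(n,r)$ has no independent set of size exactly $m+1$, since any larger independent set contains one of this size. By the union bound,
\[
\P\bigl(\alpha(K_p(n,r))>m\bigr)\ \le\ \sum_{\substack{\mathcal{A}\subset\NR\\|\mathcal{A}|=m+1}}(1-p)^{e(\mathcal{A})},
\]
where $e(\mathcal{A})$ is the number of disjoint pairs in $\mathcal{A}$, and the governing input is a lower bound on $e(\mathcal{A})$ in terms of the distance from $\mathcal{A}$ to an intersecting family. One splits the families according to this distance. For $\mathcal{A}$ with a very large intersecting subfamily, a stability form of the Erd\H{o}s--Ko--Rado theorem---available since $r\le(1/2-\eps)n$ forces $n\ge(2+\eps')r$---shows that this subfamily, and hence $\mathcal{A}$ itself, lies within $O(j)$ sets of some star $\mathcal{S}_x$, where $j$ is the distance in question; there are at most $n\binom{n}{r}^{O(j)}$ such families, and for them the bound on $e(\mathcal{A})$ is essentially elementary (see below), so that summing over $j$ gives a convergent geometric series that is $o(1)$ provided $c=c(\eps)$ is small enough. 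For $\mathcal{A}$ with no large intersecting subfamily there is no such structural handle, and one must sum over all $\binom{\binom{n}{r}}{m+1}$ families of size $m+1$; this is only affordable when $e(\mathcal{A})$ is enormous, of order at least $m^{1+\Omega(1)}$, which is precisely what the new estimates have to supply. Matching the two regimes at the right threshold, and choosing $c<c'$ compatibly, determines the constants.

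The elementary half of the estimate goes as follows: if $x$ maximises $|\mathcal{A}\cap\mathcal{S}_x|$ and $j=|\mathcal{A}\setminus\mathcal{S}_x|$, then each of the $j$ sets of $\mathcal{A}$ outside $\mathcal{S}_x$ is disjoint from all but at most $j-1$ of the star-members of $\mathcal{A}$, whence $e(\mathcal{A})\ge j\bigl(\binom{n-1-r}{r-1}-j+1\bigr)$. The obstacle---and the real content of the paper---is the estimate needed when $\mathcal{A}$ is far from every intersecting family: there $\mathcal{A}$ need not resemble any star, the bound just displayed becomes vacuous once $j$ reaches $\binom{n-1-r}{r-1}$, and the two obvious substitutes both fail. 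A Tur\'an-type bound only delivers $e(\mathcal{A})\gtrsim m$, far short of the $m^{1+\Omega(1)}$ required; and the expander mixing lemma is useless, since a family of size $m+1$ has density only about $r/n$ in $\NR$, below the normalised second eigenvalue $r/(n-r)$ of $K(n,r)$. What has to be shown instead, by a genuinely combinatorial argument tailored to the Kneser graph, is that a family of $r$-sets with no large intersecting subfamily is forced to spill across many stars and thereby accumulate a huge number of disjoint pairs; making this quantitative and uniform over $r\le(1/2-\eps)n$, and reconciling it with the counting in the union bound, is where the difficulty lies.
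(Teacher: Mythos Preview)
Your outline matches the paper's architecture: a second-moment argument for the $0$-branch and a first-moment union bound, stratified by the distance $\ell=\ell(\mathcal{A})$ to the nearest intersecting family, for the $1$-branch. Your $0$-branch is in fact cleaner than the paper's: the observation that for distinct $A,A'\notin\mathcal{S}_1$ the events $\{A\in\mathcal{T}\}$, $\{A'\in\mathcal{T}\}$ depend on disjoint edge-sets (no edge $\{A,S\}$ with $S\in\mathcal{S}_1$ can coincide with any $\{A',S'\}$, since $A,A'\notin\mathcal{S}_1$) is correct, and replaces the full second-moment computation the paper carries out over all stars.

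The gap is exactly where you locate it, but your two-regime picture understates what is needed. The paper supplies \emph{two} disjoint-pairs estimates and splits into \emph{three} regimes. For very small $\ell$ (up to roughly $\binom{n-r-1}{r-1}$) your elementary bound via Hilton--Milner suffices. For very large $\ell$ (at least $\NN^{1-\Omega(1)}$) the paper proves $e(\mathcal{A})\ge \ell^2\big/2\binom{2r}{r}$ by showing, via the Bollob\'as set-pair inequality, that the induced-matching number of $K(n,r)$ is $\frac12\binom{2r}{r}$; this is what produces the $\NN^{1+\Omega(1)}$ you ask for. Crucially, once $r\ge\eps n$ these two ranges do \emph{not} meet: the elementary bound dies near $\ell\approx\binom{n-r-1}{r-1}$, while the set-pair bound is useless until $\ell$ is a much larger power of $\NN$, because $\binom{2r}{r}$ is itself nearly $\NN$ when $r$ is close to $n/2$. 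The paper bridges this middle range with a separate lemma: combining Friedgut's stability theorem with the Kruskal--Katona theorem (pass to complements of the $\ell$ sets outside the dominant star and bound their $(r-1)$-shadow inside $[n-1]$) yields $e(\mathcal{A})\ge \ell^{1+\delta}-C\ell$ for some $\delta=\delta(\eps)>0$ whenever $\ell\le\NN^{1-\eta}$. This Kruskal--Katona step is the ingredient your sketch is missing, and without it the union bound does not close. (A minor slip: your sentence ``disjoint from all but at most $j-1$ of the star-members of $\mathcal{A}$'' is not what you mean; the correct count is that each such set is disjoint from $\binom{n-1-r}{r-1}$ members of the full star, of which at most $j-1$ are absent from $\mathcal{A}$.)
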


All the work in proving Theorem~\ref{t:main} is in showing that $c(\eps)$ exists; as we shall see, the existence of $c'(\eps)$ follows from a simple second moment calculation.

Let us briefly describe some of the ideas that go into the proof of Theorem~\ref{t:main}. We shall prove two results which, taken together, show that a large family $\C{A} \subset \NR$ without a large intersecting subfamily must necessarily contain many pairs of disjoint sets, or in other words, must induce many edges in $K(n,r)$; we do this in Section~\ref{s:disj}. We put together the pieces and give the proof of Theorem~\ref{t:main} in Section~\ref{s:proof}. In Section~\ref{s:cont}, we briefly describe some approaches to improving the dependence of $c(\eps)$ on $\eps$ in Theorem~\ref{t:main}. We conclude with some discussion in Section~\ref{s:conc}.

\section{Preliminaries}\label{s:prelim}
Henceforth, a `family' will be a uniform family on $[n]$ unless we specify otherwise. To ease the notational burden, we adopt the following notational convention: when $n$ and $r$ are clear from the context, we write $\VV = \binom{n}{r}$, $\NN = \binom{n-1}{r-1}$, $\MM = \binom{n-r-1}{r-1}$ and $\RR = \binom{2r}{r}$. 

We need a few results from extremal set theory, some classical and some more recent. The first result that we will need, due to Hilton and Milner~\citep{Hilton}, bounds the cardinality of a nontrivial uniform intersecting family. Writing $\C{A}_x$ for the subfamily of a family $\C{A}$ that consists of those sets containing $x$, we have the following.

\begin{theorem}\label{HM}
Let $n, r \in \N$ and suppose that $n > 2r$. If $\C{A} \subset \NR$ is an intersecting family with $|\C{A}| \ge \NN - \MM +2$, then there exists an $x \in [n]$ such that $\C{A} = \C{A}_x$. \qed
\end{theorem}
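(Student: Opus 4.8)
The plan is to establish the contrapositive-flavoured assertion that any intersecting family $\C{A} \subset \NR$ \emph{without} a common element has $|\C{A}| \le \NN - \MM + 1$; since the hypothesis is $|\C{A}| \ge \NN - \MM + 2$, this forces $\C{A}$ to have a common element $x$, which is exactly the conclusion $\C{A} = \C{A}_x$. I would run the argument according to the covering number $\tau = \tau(\C{A})$, the least size of a set meeting every member of $\C{A}$. As $\C{A}$ is intersecting we have $\tau \ge 1$, and $\tau = 1$ is precisely what we want to prove, so it is enough to rule out $\tau \ge 2$ given the bound on $|\C{A}|$.

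The case $\tau = 2$ should be the easy one. Let $\{x,y\}$ be a transversal and partition $\C{A}$ into the members containing both $x$ and $y$, those containing $x$ but not $y$, and those containing $y$ but not $x$; there are no others. The first class has at most $\binom{n-2}{r-2}$ members. Removing $x$ from each member of the second class and $y$ from each member of the third yields families $\C{F}, \C{G} \subseteq ([n] \setminus \{x,y\})^{(r-1)}$ which are cross-intersecting, since a member of the second class and a member of the third can only meet inside $[n] \setminus \{x,y\}$; moreover both $\C{F}$ and $\C{G}$ are non-empty, since otherwise $\C{A}$ would have a common element. The ground set now has $n - 2 > 2(r-1)$ points, so a Hilton--Milner-type bound for cross-intersecting families --- itself provable from the Kruskal--Katona theorem or by compression --- gives $|\C{F}| + |\C{G}| \le \binom{n-2}{r-1} - \binom{n-r-1}{r-1} + 1$. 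Adding the three estimates and using Pascal's identity gives $|\C{A}| \le \binom{n-1}{r-1} - \binom{n-r-1}{r-1} + 1 = \NN - \MM + 1$, contradicting $|\C{A}| \ge \NN - \MM + 2$.

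What is left is the case $\tau \ge 3$, and this is where I expect the real work to be: here $\C{A}$ is `far' from every star and one expects $|\C{A}|$ to be well below $\NN - \MM$, but the target is no longer tight, so the task is to be careful rather than sharp. I would fix a member $A_0 \in \C{A}$ and classify the remaining members by their trace on $A_0$. Two members whose traces on $A_0$ are disjoint must still intersect inside $[n] \setminus A_0$; in particular, after deleting $a_i$, the members meeting $A_0$ in the single point $a_i$ form a system of $r$ pairwise cross-intersecting families on $[n] \setminus A_0$ in which --- crucially, using $\tau \ge 3$ to keep producing members that avoid $a_i$ yet are spread out over $[n] \setminus A_0$, each of which forbids all $(r-1)$-subsets of a large fixed subset of $[n]\setminus A_0$ --- no single family can be large. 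Feeding these observations into the relevant cross-intersecting estimates (possibly in tandem with an induction on $r$) should yield a bound strictly below $\NN - \MM + 2$; assembling this bound cleanly is, I think, the main obstacle. As an alternative, the whole theorem can be approached through the shifting technique by compressing $\C{A}$ to a left-shifted family of the same size, whose structure is much more rigid; the one delicate point there is that compression may destroy non-triviality, but it can do so only by forcing every member to meet a fixed pair of points --- which is exactly the $\tau = 2$ situation already handled.
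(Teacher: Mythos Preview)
The paper does not prove Theorem~\ref{HM}; it is stated with a \verb|\qed| and attributed to Hilton and Milner~\citep{Hilton} as a known result, so there is no ``paper's own proof'' to compare against. Your proposal is therefore an attempt to supply a proof where the paper is content to cite one.

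On its own terms, your sketch follows a standard line (covering number $\tau$, cross-intersecting bounds for $\tau=2$, something stronger for $\tau\ge 3$), and the $\tau=2$ case is correct: the Hilton cross-intersecting inequality $|\C{F}|+|\C{G}|\le \binom{n-2}{r-1}-\binom{n-r-1}{r-1}+1$ for non-empty cross-intersecting $\C{F},\C{G}\subset ([n]\setminus\{x,y\})^{(r-1)}$ combines with $\binom{n-2}{r-2}+\binom{n-2}{r-1}=\binom{n-1}{r-1}$ to give exactly $\NN-\MM+1$.

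The genuine gap is the $\tau\ge 3$ case, which you yourself flag as ``the main obstacle'' and do not actually carry out. The paragraph there is a plausibility argument, not a proof: you gesture at classifying members by their trace on a fixed $A_0$ and invoking cross-intersecting estimates, but no concrete bound is derived, and the phrase ``using $\tau\ge 3$ to keep producing members that avoid $a_i$ yet are spread out'' does not name an inequality. In the standard proofs (Frankl's, or the original Hilton--Milner argument), this case requires real work --- typically one shows $|\C{A}|\le 3r\binom{n-3}{r-2}$ or a similar polynomial-in-$r$ multiple of $\binom{n-3}{r-2}$, which is then checked to be below $\NN-\MM+1$ for $n>2r$ (with small $n$ handled separately). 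Your shifting alternative has the same status: you correctly note that compression can destroy non-triviality and reduce to $\tau=2$, but the analysis of the left-shifted $\tau\ge 3$ family is not supplied either. As written, the proposal proves the theorem only for $\tau\le 2$.
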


The next result we shall require, due to Friedgut~\citep{Stab1}, is a quantitative extension of the Hilton--Milner theorem  which says that any sufficiently large uniform intersecting family must resemble a star.

\begin{theorem}\label{friedgut}
For every $\eps > 0$, there exists a $C = C(\eps) >0$ such that for all $n, r \in \N$ with $\eps n \le r \le (1/2 - \eps)n$, the following holds: if $\C{A} \subset \NR$ is an intersecting family and $|\C{A}| = \NN - k$, then there exists an $x \in [n]$ for which $|\C{A}_x| \ge \NN - Ck$. \qed
\end{theorem}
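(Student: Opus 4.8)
The plan is to follow the spectral (Fourier-analytic) strategy underlying Friedgut's theorem. Let $\C{A} \subset \NR$ be intersecting with $|\C{A}| = \NN - k$. We may assume $k$ is at most a small constant (depending on $\eps$) times $\NN$, since otherwise the conclusion holds vacuously for any choice of $x$ once $C$ is taken large enough. Write $\mu$ for the uniform measure on $\NR$ and put $f = \mathbf{1}_{\C{A}}$, so that $\E_\mu f = r/n - k/\VV$ is bounded away from $0$ and, since $r \le (1/2-\eps)n$, away from $1$.

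The first step is a stability version of the Hoffman (ratio) bound for the Kneser graph. Recall that $K(n,r)$ is regular of degree $\binom{n-r}{r}$, with eigenvalues $(-1)^j \binom{n-r-j}{r-j}$ for $0 \le j \le r$ on the spaces $V_0, V_1, \dots, V_r$ of the Johnson scheme; for $n > 2r$ one has $\binom{n-r}{r} > \MM = \binom{n-r-1}{r-1} > \binom{n-r-2}{r-2} > \cdots$ in absolute value, so the least eigenvalue is $-\MM$ and it is attained precisely on $V_1$. Decomposing $f = \sum_j f^{=j}$ into eigencomponents and using that $\langle f, A f \rangle_\mu = 0$ (as the intersecting family $\C{A}$ induces no edge in $K(n,r)$) together with the value of $\E_\mu f$, a short computation — essentially the proof of the ratio bound, keeping track of the slack coming from the gap $\MM - \binom{n-r-2}{r-2}$ — yields $\sum_{j \ge 2} \|f^{=j}\|_\mu^2 = O_\eps(k/\VV)$. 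In words, $f$ carries all but an $O(k/\VV)$ fraction of its $\ell^2$-mass on $V_0 \oplus V_1$, the space of functions of degree at most $1$ on the slice.

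The second step is a Friedgut--Kalai--Naor type theorem on the slice $\NR$: a Boolean function whose Fourier mass off $V_0 \oplus V_1$ is at most $\eta$ lies within symmetric difference $O(\eta \VV)$ of a star $\{B : x_0 \in B\}$, of a co-star $\{B : x_0 \notin B\}$, or of one of $\emptyset, \NR$ — with constants uniform over $r/n \in [\eps, 1/2-\eps]$, which is exactly where the hypothesis $\eps n \le r$ enters. Feeding in $\eta = O_\eps(k/\VV)$ produces an $x_0$ and one of these templates $\C{G}$ with $|\C{A} \triangle \C{G}| = O_\eps(k)$. The templates $\emptyset$ and $\NR$ are excluded since $\E_\mu f$ is bounded away from $0$ and $1$; the co-star is excluded because $\C{A}$, being intersecting, has $|\C{A}| \le \NN$ by Erd\H{o}s--Ko--Rado, whereas $|\{B : x_0 \notin B\}| = \binom{n-1}{r} \ge (1+4\eps)\NN$ when $r \le (1/2-\eps)n$, so its symmetric difference with $\C{A}$ is at least $4\eps\NN$, which exceeds $O_\eps(k)$ once $k$ is a small enough multiple of $\NN$, as we have assumed. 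Hence $\C{G}$ is the star $\C{S}$ centred at $x_0$, and since $f$ and $\mathbf{1}_{\C{S}}$ are Boolean we get $\NN - |\C{A}_{x_0}| \le |\C{A} \triangle \C{S}| = O_\eps(k)$, which is the assertion with $C = C(\eps)$ the implied constant.

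The main obstacle is the second step: obtaining an FKN-type theorem on the slice with a \emph{linear} loss — distance to a dictator bounded by a constant times the excess Fourier mass — rather than the square-root loss that a bare $\ell^2$ argument gives, since only the linear version delivers a bound of the shape $\NN - |\C{A}_{x_0}| \le Ck$. This requires hypercontractivity adapted to the Johnson scheme (or an equivalent combinatorial stability argument), with all constants controlled uniformly for $r/n$ bounded away from $0$ and $1/2$. By contrast, the ratio-bound step is a one-line eigenvalue estimate, and the final clean-up, which only invokes Erd\H{o}s--Ko--Rado on a ground set of size $n-1$, is routine.
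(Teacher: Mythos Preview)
The paper does not prove this theorem at all: it is stated as a result of Friedgut~\citep{Stab1} and closed with a \qed, so there is no in-paper argument to compare against. Your sketch is, in outline, precisely the strategy of Friedgut's original proof: the Hoffman/ratio-bound computation on the Kneser spectrum shows that the indicator of $\C{A}$ has all but an $O_\eps(k/\VV)$ fraction of its $\ell^2$-mass on the degree-$\le 1$ part of the Johnson scheme, and then a Boolean stability (FKN-type) statement on the slice forces $\C{A}$ to lie within $O_\eps(k)$ of a star. You are also right that the second step is where the difficulty lies --- a naive $\ell^2$ bound only gives $|\C{A} \triangle \C{S}| = O_\eps\big((k\NN)^{1/2}\big)$, and upgrading this to the linear bound $O_\eps(k)$ is exactly the content of the structural lemma that Friedgut proves. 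The eigenvalue step and the clean-up excluding the co-star and the trivial templates are, as you say, routine once the ranges $\eps \le r/n \le 1/2-\eps$ and $k \le c(\eps)\NN$ are in force.
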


We will also need the following well-known inequality for cross-intersecting families due to the second author~\citep{setpairs}.

\begin{theorem}\label{BB-thm}
Let $(A_1, B_1), \dots , (A_m, B_m)$ be pairs of disjoint $r$-element sets such that $A_i \cap B_j \neq \emptyset$ for $i, j \in [m]$ whenever $i \neq j$. Then $m \le \RR $. \qed
\end{theorem}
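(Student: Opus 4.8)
The plan is to use the permutation (or ``random ordering'') method, which is the standard route to inequalities of this type. First I would fix a finite ground set $X$ containing $\bigcup_{i=1}^m (A_i \cup B_i)$ and let $\pi$ be a uniformly random linear order on $X$. For each $i \in [m]$, let $E_i$ be the event that in $\pi$ every element of $A_i$ precedes every element of $B_i$. Since $A_i$ and $B_i$ are disjoint $r$-element sets, precisely $2r$ elements are involved and $E_i$ depends only on their relative order; of the $\binom{2r}{r}$ equally likely ways of choosing which $r$ of these $2r$ positions are occupied by elements of $A_i$, exactly one is favourable, so $\P(E_i) = \binom{2r}{r}^{-1} = \RR^{-1}$.

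The heart of the argument is the claim that $E_1, \dots, E_m$ are pairwise disjoint. To prove this I would suppose that $E_i$ and $E_j$ both occur for some $i \neq j$, and let $a$ be the element of $A_i \cup A_j$ that appears last under $\pi$; without loss of generality $a \in A_i$. By hypothesis $A_j \cap B_i \neq \emptyset$, so we may pick $b \in A_j \cap B_i$. On the one hand $b \in A_j \subseteq A_i \cup A_j$, so $\pi(b) \le \pi(a)$; on the other hand $a \in A_i$ and $b \in B_i$, so, since $E_i$ forces every element of $A_i$ to precede every element of $B_i$, we have $\pi(a) < \pi(b)$ --- a contradiction. Hence no two of the events $E_i$ can occur simultaneously.

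Finally I would put the pieces together: since the $E_i$ are pairwise disjoint events, each of probability $\RR^{-1}$,
\[
\frac{m}{\RR} = \sum_{i=1}^m \P(E_i) = \P\left(\bigcup_{i=1}^m E_i\right) \le 1,
\]
and so $m \le \RR$, as required.

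The main obstacle --- really the only non-routine point --- is getting the ``skew-symmetric'' use of the hypothesis right in the disjointness claim: one breaks the tie by comparing the last elements of $A_i$ and $A_j$ under $\pi$, and then one must invoke $A_j \cap B_i \neq \emptyset$ (the cross-condition with the roles of $i$ and $j$ interchanged) rather than $A_i \cap B_j \neq \emptyset$. Once this is seen, the probability computation and the union bound are immediate. As a sanity check, the bound is best possible: fixing a $2r$-element set $Y$ and taking all pairs $(A, Y \setminus A)$ with $A \in Y^{(r)}$ yields exactly $\RR$ pairs that satisfy the hypothesis.
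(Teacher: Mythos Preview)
The paper does not actually prove Theorem~\ref{BB-thm}: it is stated as a known result (attributed to Bollob\'as via the citation~\citep{setpairs}) and marked with a \qed, so there is no proof in the paper to compare against.

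That said, your argument is correct. The permutation method you use is the standard modern proof of the Bollob\'as set-pair inequality, and every step checks out: the probability computation $\P(E_i)=\RR^{-1}$ is right, and your disjointness argument is clean --- you correctly pick the $\pi$-last element of $A_i\cup A_j$, break symmetry to put it in $A_i$, and then invoke the cross-condition $A_j\cap B_i\neq\emptyset$ (not $A_i\cap B_j$) to produce the contradiction. Your closing remark that the partitions of a fixed $2r$-set into two $r$-sets witness tightness is also accurate.
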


Finally, we shall require a theorem of Kruskal~\citep{Kruskal} and Katona~\citep{Katona}. For a family $\C{A} \subset \NR$, its shadow in $[n]^{(k)}$, denoted $\partial^{(k)} \C{A}$, is the family of those $k$-sets contained in some member of $\C{A}$. For $x \in \R$ and $r \in \N$, we define the \emph{generalised binomial coefficient $\binom{x}{r}$} by setting
\[\binom{x}{r} = \frac{x(x-1)\dots(x-r+1)} {r!}.\]
The following convenient formulation of the Kruskal--Katona theorem is due to Lov\'asz~\citep{Lovasz}.

\begin{theorem}\label{lov}
Let $n,r,k\in \N$ and suppose that $k \le r \le n$. If the cardinality of $\C{A} \subset \NR$ is $\binom{x}{r}$ for some real number $x \ge r$, then $|\partial^{(k)} \C{A}| \ge \binom{x}{k}$. \qed
\end{theorem}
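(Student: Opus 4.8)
This is the Kruskal--Katona theorem in the form isolated by Lov\'asz, and the plan is to separate its combinatorial content from an analytic one. First I would reduce to the one-step shadow: since $y\mapsto\binom{y}{j}$ is continuous and strictly increasing on $[j,\infty)$ for each fixed $j\ge1$, it suffices to prove that $|\C{A}|=\binom{x}{r}$ with $x\ge r$ forces $|\partial^{(r-1)}\C{A}|\ge\binom{x}{r-1}$. Granting this, one writes $|\partial^{(r-1)}\C{A}|=\binom{x_1}{r-1}$ for some real $x_1\ge x$, applies the one-step bound to the $(r-1)$-uniform family $\partial^{(r-1)}\C{A}$ (using $\partial^{(r-2)}(\partial^{(r-1)}\C{A})=\partial^{(r-2)}\C{A}$) to get $|\partial^{(r-2)}\C{A}|\ge\binom{x_1}{r-2}\ge\binom{x}{r-2}$, and iterates down to level $k$.

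For the one-step bound I would run the compression argument. For $i<j$, the $(i,j)$-compression replaces each $A\in\C{A}$ with $j\in A$, $i\notin A$ by $(A\setminus\{j\})\cup\{i\}$ unless the latter already lies in $\C{A}$; a short verification shows it preserves $|\C{A}|$, does not increase $|\partial^{(r-1)}\C{A}|$, and cannot be iterated indefinitely, so we may assume $\C{A}$ is fixed by every such compression. Splitting at the coordinate $n$, set $\C{A}_0=\{A\in\C{A}:n\notin A\}\subseteq[n-1]^{(r)}$ and $\C{A}_1=\{A\setminus\{n\}:n\in A\in\C{A}\}\subseteq[n-1]^{(r-1)}$. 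A direct check gives $\partial^{(r-1)}\C{A}=\bigl(\partial^{(r-1)}\C{A}_0\cup\C{A}_1\bigr)\sqcup\{B\cup\{n\}:B\in\partial^{(r-2)}\C{A}_1\}$, and compressedness forces $\C{A}_1\subseteq\partial^{(r-1)}\C{A}_0$, so that $|\partial^{(r-1)}\C{A}|=|\partial^{(r-1)}\C{A}_0|+|\partial^{(r-2)}\C{A}_1|$. Writing $|\C{A}_1|=\binom{y}{r-1}$ and $|\C{A}_0|=\binom{z}{r}$, so that $\binom{z}{r}+\binom{y}{r-1}=\binom{x}{r}$, a double induction (on $r$ applied to $\C{A}_1$, on $n$ applied to $\C{A}_0$) gives $|\partial^{(r-1)}\C{A}|\ge\binom{z}{r-1}+\binom{y}{r-2}$ once the degenerate cases (such as $\C{A}_1=\emptyset$, and $r=n$) are dealt with directly.

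What is left is the purely numerical inequality: if $z\ge r$, $y\ge r-1$ and $\binom{z}{r}+\binom{y}{r-1}=\binom{x}{r}$, then $\binom{z}{r-1}+\binom{y}{r-2}\ge\binom{x}{r-1}$. This is where the real content sits, and I expect it to be the main obstacle: it has to hold uniformly in the \emph{real} parameters $x,y,z$ produced by the compression induction, so convexity by itself is not quite enough and one wants a careful estimate exploiting the identity $\binom{x}{r}=\frac{x-r+1}{r}\binom{x}{r-1}$ --- informally, that $t\mapsto\binom{t}{r-1}$ grows sublinearly in $\binom{t}{r}$. An alternative, if one is content to quote rather than reprove the combinatorial half, is to invoke the Kruskal--Katona theorem in its standard colexicographic form and then recover Lov\'asz's statement from the cascade representation of $|\C{A}|$; but there essentially the same inequality resurfaces, as Lov\'asz's ``simplification'' of the cascade shadow bound, together with a further convexity argument to pass from integer to real $x$.
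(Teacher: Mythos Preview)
The paper does not prove this statement: Theorem~\ref{lov} is quoted as the Lov\'asz form of the Kruskal--Katona theorem, with a citation and a terminal \qedsymbol, and no argument is given. So there is nothing to compare your proposal against on the paper's side.

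That said, your sketch is the standard route to Lov\'asz's formulation and is structurally sound: reduce to the one-step shadow by iterating, compress to a left-shifted family, split on the top coordinate, and induct. You have correctly identified where the genuine work lies, namely the real-parameter inequality
\[
\binom{z}{r}+\binom{y}{r-1}=\binom{x}{r}\quad\Longrightarrow\quad \binom{z}{r-1}+\binom{y}{r-2}\ge\binom{x}{r-1}
\]
for real $x,y,z$ in the appropriate ranges. As you anticipate, this is exactly the step that distinguishes Lov\'asz's version from the cascade version, and it does require a short but honest convexity/monotonicity argument (one clean way is to fix the constraint and differentiate, or equivalently to use that $t\mapsto\binom{t}{r-1}$ is concave as a function of $\binom{t}{r}$). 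Your proposal stops just short of carrying this out, so as a self-contained proof it is incomplete at precisely the point you flag; but since the paper is content to cite the result, your level of detail already exceeds what the paper supplies.
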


To avoid clutter, we omit floors and ceilings when they are not crucial. We use the standard $o(1)$ notation to denote any function that tends to zero as $n$ tends to infinity; the variable tending to infinity will always be $n$ unless we explicitly specify otherwise.

\section{The number of disjoint pairs}\label{s:disj}
Given a family $\C{A}$, we write $e(\C{A})$ for the number of disjoint pairs of sets in $\C{A}$; equivalently, $e(\C{A})$ is the number of edges in the subgraph of the Kneser graph induced by $\C{A}$. In this section, we give some bounds for $e(\C{A})$.

We denote by $\C{A}^*$ the largest intersecting subfamily of a family $\C{A}$; if this subfamily is not unique, we take any subfamily of maximum cardinality. We write $\ell(\C{A}) = |\C{A}| - |\C{A}^*|$ for the difference between the cardinality of $\C{A}$ and the largest intersecting subfamily of $\C{A}$. 

Trivially, we have $e(\C{A}) \ge \ell(\C{A})$. Our first lemma says that we can do much better than this trivial bound when $\ell(\C{A})$ is large.

\begin{lemma}\label{set-pairs} Let $n, r \in \N$. For any $\C{A} \subset \NR$,
\[ e(\C{A}) \ge \frac{\ell(\C{A})^2}{2\RR}.
\]
\end{lemma}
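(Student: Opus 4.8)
The plan is to exploit Theorem~\ref{BB-thm} to control the structure of a family with few disjoint pairs. The key idea is a greedy extraction of disjoint pairs: starting from $\C{A}$, repeatedly find a disjoint pair $(A,B)$ with both $A,B \in \C{A}$, record it, and delete both sets from the family; continue until the family that remains is intersecting. Say this process runs for $m$ steps, producing pairs $(A_1,B_1),\dots,(A_m,B_m)$ and leaving an intersecting family $\C{A}' \subset \C{A}$ with $|\C{A}'| = |\C{A}| - 2m$. Since $\C{A}'$ is intersecting, $|\C{A}^*| \ge |\C{A}'| = |\C{A}| - 2m$, hence $2m \ge \ell(\C{A})$.

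Next I would bound $e(\C{A})$ from below in terms of $m$. The point of the greedy process is that at the moment the pair $(A_i,B_i)$ is chosen, the sets $A_{i+1},B_{i+1},\dots,A_m,B_m$ were all still present in the family, so each of $A_i$ and $B_i$ is disjoint from — and hence in the Kneser graph adjacent to — at least as many later sets as... wait, that is not quite the clean count. Instead, the cleaner route: consider the edges of $K(n,r)$ among the $2m$ chosen sets. I claim that for each pair of indices $i \neq j$, at least one of the four pairs $\{A_i,A_j\}, \{A_i,B_j\}, \{B_i,A_j\}, \{B_i,B_j\}$ is a disjoint pair, i.e., an edge. Indeed, if none of these were disjoint, then $(A_i,B_i)$ and $(A_j,B_j)$ would be cross-intersecting, and then the single pair of pairs $(A_i,B_i),(A_j,B_j)$ together with — more usefully — we apply Theorem~\ref{BB-thm} to a maximal subcollection of the $(A_i,B_i)$ that is pairwise cross-intersecting: such a subcollection has size at most $\RR$. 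So among any $\RR+1$ of our $m$ pairs, some two are \emph{not} cross-intersecting, which yields at least one edge among the eight sets involved. More efficiently: partition (greedily) the index set $[m]$ into groups each of which is pairwise cross-intersecting; by Theorem~\ref{BB-thm} each group has size at most $\RR$, so there are at least $m/\RR$ groups, and any two pairs lying in different groups — no, they need not conflict.

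Let me instead use the counting that actually works. For distinct $i,j$, call $\{i,j\}$ \emph{good} if the pairs $(A_i,B_i)$ and $(A_j,B_j)$ are not cross-intersecting, so that a good pair contributes at least one edge of $K(n,r)$ among $\{A_i,B_i,A_j,B_j\}$, and a fixed edge $\{X,Y\}$ of $K(n,r)$ (with $X,Y$ among the chosen sets) is counted by at most a bounded number — in fact each chosen set lies in one pair, so an edge $\{X,Y\}$ arises from at most one ordered choice of $(i,j)$ — giving $e(\C{A}) \ge \#\{\text{good pairs}\}$. Now consider the auxiliary graph $H$ on vertex set $[m]$ whose edges are the \emph{non}-good (i.e.\ cross-intersecting) pairs. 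By Theorem~\ref{BB-thm}, $H$ contains no clique of size $\RR+1$, since a clique corresponds to a pairwise cross-intersecting subcollection. By Turán's theorem, $H$ has at most $(1 - 1/\RR)m^2/2$ edges, so the number of good pairs is at least $\binom{m}{2} - (1-1/\RR)m^2/2 \ge m^2/(2\RR) - m/2$. Combining with $e(\C{A}) \ge m^2/(2\RR) - m/2$ and $m \ge \ell(\C{A})/2$, and noting the trivial bound $e(\C{A}) \ge \ell(\C{A})$ to absorb the linear error term, I would conclude $e(\C{A}) \ge \ell(\C{A})^2/(2\RR)$ after a short interpolation between the two bounds (for instance, if $\ell(\C{A}) \le \RR$ the trivial bound suffices, and otherwise the quadratic bound dominates).

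The main obstacle, and the part requiring care, is making the edge-counting step honest: I must verify that a non-cross-intersecting pair of pairs really does force an edge among the eight relevant sets (this is immediate: if $(A_i,B_i),(A_j,B_j)$ fail to be cross-intersecting then some $A_i\cap B_j=\emptyset$ or $B_i\cap A_j=\emptyset$, giving an edge), and that distinct good index-pairs are not over-counted into the same Kneser edge by more than an absolute constant — here using that the $2m$ sets $A_1,B_1,\dots,A_m,B_m$ are all distinct, so each chosen set belongs to exactly one pair and hence each edge of $K(n,r)$ on these sets is produced by a bounded number of good pairs. Everything else (the greedy extraction, the inequality $2m \ge \ell(\C{A})$, and the Turán bound on $H$) is routine, and the final interpolation to clear the lower-order term is a one-line case split on whether $\ell(\C{A}) \le \RR$.
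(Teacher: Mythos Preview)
Your approach is genuinely different from the paper's and is in spirit correct, but it does not deliver the stated constant. The paper argues purely graph-theoretically: it shows (Proposition~\ref{new-set-pairs}) that any graph $G$ with induced-matching number $m(G)=m$ satisfies $|E(G)|\ge k^2/4m$ where $k=|V|-\alpha(G)$, by repeatedly deleting a vertex of maximum degree and using that $\Delta(G)\ge k/2m$; it then uses Theorem~\ref{BB-thm} (doubled up, applying it to the $2m$ pairs $(A_i,B_i)$ and $(B_i,A_i)$) to show $m(K(n,r))=\RR/2$. Applying this to the subgraph induced by $\C{A}$ gives $e(\C{A})\ge \ell(\C{A})^2/(4\cdot \RR/2)=\ell(\C{A})^2/(2\RR)$ on the nose. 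Your route --- greedy maximal matching, then Tur\'an on the auxiliary ``cross-intersecting'' graph whose clique number is at most $\RR$ by Theorem~\ref{BB-thm} --- is a nice alternative use of the same set-pair inequality, and your overcounting analysis is fine: distinct good index-pairs $\{i,j\}$ yield distinct Kneser edges because each chosen set lies in exactly one pair.

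The gap is quantitative. From your argument you get (including the $m$ matching edges)
\[
e(\C{A})\ \ge\ m+\Bigl(\tbinom{m}{2}-\bigl(1-\tfrac{1}{\RR}\bigr)\tfrac{m^2}{2}\Bigr)\ =\ \frac{m^2}{2\RR}+\frac{m}{2},
\]
and $m\ge \ell(\C{A})/2$. Substituting gives only $e(\C{A})\ge \ell(\C{A})^2/(8\RR)+\ell(\C{A})/4$, a factor of four short of the lemma. Your proposed interpolation does not close this: for $\ell(\C{A})\le 2\RR$ the trivial bound $e(\C{A})\ge \ell(\C{A})$ already implies the lemma, but for $\ell(\C{A})>2\RR$ you would need $\ell(\C{A})/4\ge 3\ell(\C{A})^2/(8\RR)$, i.e.\ $\ell(\C{A})\le 2\RR/3$, a contradiction. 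The loss is intrinsic to the step $m\ge \ell(\C{A})/2$, which can be tight, so without a new idea (e.g.\ working with an \emph{induced} matching rather than an arbitrary maximal one, as the paper in effect does) your method proves $e(\C{A})\ge \ell(\C{A})^2/(8\RR)$ but not the stated bound. For the application in Theorem~\ref{t:main} this weaker constant would be harmless, but as a proof of Lemma~\ref{set-pairs} it falls short.
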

\begin{proof}
To prove this lemma, we need the notion of an induced matching. An {\em induced matching of size $m$} in a graph $G$ is a set of $2m$ vertices inducing a subgraph consisting of $m$ independent edges; equivalently, we refer to these $m$ edges as an induced matching of size $m$. The {\em induced-matching number} of $G$, in notation, $m(G)$, is the maximal size of an induced matching in $G$.

\begin{proposition}\label{new-set-pairs}
Let $G=(V,E)$ be a graph with $m(G)=m\ge 1$. Then 
\[|E|\ge \frac{k^2}{4m},\] 
where $k = |V| - \alpha(G)$.
\end{proposition}
\begin{proof}
Let us choose $X=\{x_1, \dots , x_m\}$ and $Y=\{y_1, \dots , y_m\}$ so that the edges $x_1y_1, \dots , x_my_m$ constitute an induced matching. Let $Z=\Gamma(X\cup Y)$ be the set of neighbours of the vertices in $X\cup Y$; thus $X\cup Y \subset Z$. Since $m(G)=m$, the set $V(G)\setminus Z$ is independent, so $|Z|\ge k$. Since some vertex in $X \cup Y$ has at least $|Z|/2m$ neighbours, we conclude that $\Delta(G)\ge |Z|/2m\ge k/2m$ where $\Delta(G)$ is the maximum degree of $G$.

Now define a sequence of graphs $G = G_0\supset G_1 \supset \dots \supset G_{k}$ and a sequence of vertices $x_0, x_1, \dots , x_{k}$ by taking $x_i$ to be a vertex of $G_i$ of maximal degree and $G_{i+1}$ to be the graph obtained from $G_i$ by deleting $x_i$. We know from our earlier arguments that $\Delta(G_i)\ge (k-i)/2m$, so $|E|\ge \sum_{i=0}^{k} \Delta(G_i)\ge k^2/4m$.
\end{proof}

To apply the previous proposition, we need the following corollary of Theorem~\ref{BB-thm} the proof of which is implicit in~\citep{typical}; we include the short proof here for completeness.

\begin{proposition}\label{BB-cor}
For $n\ge 2r$, the induced-matching number of $K(n,r)$ is 
\[m(K(n, r))=\binom{2r-1}{r-1}= \frac{\RR}{2}.\]
\end{proposition}

\begin{proof}
Let $A_1B_1, \dots , A_mB_m$ be an induced matching in $K(n, r)$. For $m+1\le i \le 2m$, we set $A_i=B_{i-m}$ and
$B_i=A_{i-m}$. We apply Theorem~\ref{BB-thm} to the pairs $(A_1, B_1), \dots , (A_{2m}, B_{2m})$ and conclude that $2m\le \RR$.

The $\RR/2$ partitions of $[2r]$ into disjoint $r$-sets form an induced matching, so $m(K(n, r))=\RR/2$, as claimed.
\end{proof}

The lemma follows by applying Proposition~\ref{new-set-pairs} to $G_{\C{A}}$, the subgraph of the Kneser graph $K(n,r)$ induced by $\C{A}$.
\end{proof}

Note that Lemma~\ref{set-pairs} is only effective when $\ell(\C{A}) \ge 2\RR$. The next, somewhat technical, lemma complements Lemma~\ref{set-pairs} by giving a better bound when $\ell(\C{A})$ is small provided the size of $\C{A}$ is large.

\begin{lemma}\label{kk}
For every $\eps, \eta > 0$, there exist constants $\delta = \delta(\eps, \eta) > 0$ and $C = C(\eps)>0$ with the following property: for all $n, r \in \N$ with $\eps n \le r \le (1/2 - \eps)n$, we have 
\[e(\C{A}) \ge \ell(\C{A})^{1+\delta} - C\ell(\C{A}) \] 
for any family $\C{A} \subset \NR$ with $|\C{A}| = \NN$ and $\ell(\C{A}) \le \NN^{1-\eta}$.
\end{lemma}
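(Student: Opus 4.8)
The plan is to exploit the fact that when $\mathcal{A}$ has size exactly $\NN$ but its largest intersecting subfamily $\mathcal{A}^*$ misses $\NN$ by $\ell = \ell(\mathcal{A})$, the family $\mathcal{A}^*$ is a large intersecting family, so by Friedgut's theorem (Theorem~\ref{friedgut}) it is concentrated around a star: there is an $x$ with $|\mathcal{A}^*_x| \ge |\mathcal{A}^*| - C\ell = \NN - (C+1)\ell$. Writing $\mathcal{B} = \mathcal{A}_x$ and $\mathcal{C} = \mathcal{A} \setminus \mathcal{A}_x$ (the sets of $\mathcal{A}$ avoiding $x$), we have $|\mathcal{C}| \ge \NN - |\mathcal{B}|$, and since $|\mathcal{A}| = \NN$ we get $|\mathcal{C}| = \NN - |\mathcal{B}|$; moreover $|\mathcal{B}| \ge |\mathcal{A}^*_x|$ is close to $\NN$, so $|\mathcal{C}|$ is small (at most $O(\ell)$, and in any case at most $\NN^{1-\eta}$ up to constants). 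Every disjoint pair we can produce between a set in $\mathcal{C}$ and a set in $\mathcal{B}$ is an edge of $G_{\mathcal{A}}$, so it suffices to lower bound the number of such cross pairs. For a fixed $C \in \mathcal{C}$, the sets of $\mathcal{B}$ disjoint from $C$ are exactly the sets containing $x$ and avoiding the $r$ elements of $C$ (note $x \notin C$), i.e.\ there are $\binom{n-r-1}{r-1} = \MM$ potential such sets, and $\mathcal{B}$ contains all but at most $|\mathcal{B}^c \cap \mathcal{A}_x|$... more usefully, since $|\mathcal{B}|$ is within $O(\ell)$ of the full star of size $\NN$, the star misses at most $O(\ell)$ sets, so $C$ has at least $\MM - O(\ell)$ neighbours in $\mathcal{B}$. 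Summing over $C \in \mathcal{C}$ gives $e(\mathcal{A}) \ge |\mathcal{C}|(\MM - O(\ell))$.

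The next step is to bound $|\mathcal{C}|$ from below in terms of $\ell$. Here the key point is that $\mathcal{A}^*$, being a maximum intersecting subfamily, must contain essentially all of $\mathcal{A}_x$ together with as many sets of $\mathcal{C}$ as can be added while staying intersecting; in particular $\mathcal{A}^*$ contains $\mathcal{A}_x$ (replacing $\mathcal{A}^*$ by $\mathcal{A}^* \cup \mathcal{A}_x$ only helps if... this needs the observation that $\mathcal{A}_x$ is itself intersecting, but $\mathcal{A}^* \cup \mathcal{A}_x$ need not be). I would instead argue directly: $|\mathcal{A}^*| \ge |\mathcal{A}_x|$ since $\mathcal{A}_x$ is intersecting, so $\ell = \NN - |\mathcal{A}^*| \le \NN - |\mathcal{A}_x| = |\mathcal{C}|$; thus $|\mathcal{C}| \ge \ell$. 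Feeding this into the cross-pair bound gives $e(\mathcal{A}) \ge \ell \cdot (\MM - O(\ell))$. Now use $\ell \le \NN^{1-\eta}$ and the standard estimate $\MM = \binom{n-r-1}{r-1} \ge \NN^{\beta}$ for some $\beta = \beta(\eps) > 0$ whenever $\eps n \le r \le (1/2-\eps)n$ (because both $\NN$ and $\MM$ are exponentially large in $n$ with comparable exponents in this range; quantitatively $\MM/\NN = \prod_{i=1}^{r-1}\frac{n-r-i}{n-i}$ is bounded below by $c^n$ for a constant $c = c(\eps) > 1/2$... actually $\ge (1 - r/(n-r))^{r} \ge (\eps/(1-\eps))^{r}$, still exponentially small, so one needs $\MM \ge \NN^{1-\eta'}$-type bounds rather than $\NN^\beta$). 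The cleanest route: choose $\delta$ small enough that $\MM \ge 2\NN^\delta$, equivalently $\delta \le \log(\MM/2)/\log\NN$, and since $\ell \le \NN^{1-\eta}$ we have $\ell^{1+\delta} \le \ell \cdot \NN^{\delta(1-\eta)} \le \ell \cdot \NN^{\delta}$, so as long as $\MM \ge \NN^{\delta} + C$ we get $\ell \cdot (\MM - C) \ge \ell^{1+\delta}$, which is the desired inequality after absorbing $C\ell$.

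The main obstacle, and where the constraint $\ell \le \NN^{1-\eta}$ and the role of $\eta$ enter, is verifying that one can indeed pick $\delta = \delta(\eps,\eta) > 0$ with $\MM \ge \NN^{\delta}$ (plus a harmless additive constant) throughout the range $\eps n \le r \le (1/2-\eps)n$ — that is, showing that $\log \MM / \log \NN$ is bounded below by a positive constant depending only on $\eps$. This is a routine but essential asymptotic computation: one writes $\log \NN$ and $\log \MM$ via Stirling, observes both are $\Theta_\eps(n)$, and extracts the ratio; the dependence of $\delta$ on $\eta$ comes in only to give the slack $\NN^{\delta(1-\eta)} \le \NN^\delta / (\text{something})$ needed to swallow the $C\ell$ error term, which is why $\delta$ must be taken smaller when $\eta$ is smaller. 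I would also need to handle the bookkeeping when $|\mathcal{C}|$ could be as large as $\NN^{1-\eta}$ rather than $O(\ell)$ — but since larger $|\mathcal{C}|$ only increases $e(\mathcal{A})$, the bound $|\mathcal{C}| \ge \ell$ is all that is used, and Friedgut's theorem is in fact needed only to guarantee that $\mathcal{A}^*$ is \emph{contained} in a bounded number of stars so that the "neighbours in $\mathcal{B}$" count $\MM - O(\ell)$ is valid; if $|\mathcal{A}_x|$ itself is far from $\NN$ one falls back on Lemma~\ref{set-pairs} or re-runs the argument with the $O(\ell)$ error term enlarged, which still works since $\MM$ dominates.
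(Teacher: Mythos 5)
The first half of your argument matches the paper's: you apply Friedgut's theorem to find an $x$ with $|\C{A}^*_x| \ge \NN - C\ell$, conclude $|\C{A}_x| \ge \NN - C\ell$, and then aim to count disjoint pairs between $\C{A}_x$ and the sets of $\C{A}$ not containing $x$. But the way you finish this count has a genuine gap, and the paper's use of the Kruskal--Katona theorem (Theorem~\ref{lov}) is precisely what you are missing.

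Your bound is $e(\C{A}) \ge |\C{C}| \left(\MM - O(\ell)\right) \ge \ell\left(\MM - C\ell\right)$: for each fixed $C \in \C{C}$ you get the additive error $C\ell$ coming from the sets that may be missing from the star at $x$, so the total is $\ell\MM - C\ell^2$, not $\ell(\MM - C)$ as written at the end of your sketch. Once the error is $C\ell$ per term rather than a constant, the bound is vacuous as soon as $\ell \gtrsim \MM/C$. And this range is nonempty under the hypotheses: for $r = (1/2-\eps)n$ with small $\eps$, one has $\MM \approx \NN^{1/2 + o(1)}$, while $\ell$ is allowed up to $\NN^{1-\eta}$, which is far larger when $\eta < 1/2$. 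The fallback you mention to Lemma~\ref{set-pairs} does not close this gap either, since $\RR$ exceeds $\MM$ once $r > n/3$, so there is a range $\MM \lesssim \ell \lesssim \RR$ where neither the direct cross-count nor the induced-matching bound suffices. The claim "which still works since $\MM$ dominates" is therefore false in general.

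The paper sidesteps this by choosing a subfamily $\C{B}$ of $\C{A}\setminus\C{A}_x$ of size exactly $\ell$, passing to complements $\C{B}' = \{[n-1]\setminus B\} \subset [n-1]^{(n-r-1)}$, and lower-bounding the number of \emph{distinct} $(r-1)$-sets $A'$ covered by some $B'$, i.e.\ $|\partial^{(r-1)}\C{B}'|$, rather than summing $\MM - C\ell$ over each $B$. By Kruskal--Katona in Lov\'asz's form, if $\ell = \binom{x}{n-r-1}$ then $|\partial^{(r-1)}\C{B}'| \ge \binom{x}{r-1}$, and because $r-1 < n-r-1$ and $\ell \le \NN^{1-\eta}$ forces $x$ away from $n$, a Stirling computation gives $\binom{x}{r-1} \ge \binom{x}{n-r-1}^{1+\delta} = \ell^{1+\delta}$ for some $\delta(\eps,\eta)>0$. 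This superlinear growth of the shadow in $\ell$ is the key ingredient that your per-element count (which can only ever supply the factor $\MM$, a constant independent of $\ell$) cannot replicate. Subtracting the at most $C\ell$ sets possibly absent from $\C{A}'$ then gives the stated bound $\ell^{1+\delta} - C\ell$, valid across the whole range of $\ell$.
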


To clarify, the $C(\eps)$ in the statement of the lemma above is the same as the $C(\eps)$ guaranteed by Theorem~\ref{friedgut}.

\begin{proof}[Proof of Lemma~\ref{kk}]

First, let us note that since we always have $e(\C{A}) \ge \ell(\C{A})$, it suffices to prove the lemma under the assumption that $n$ is sufficiently large.

Let $\ell = \ell(\C{A})$. We start by observing that most of $\C{A}$ must be contained in a star. Indeed, as before, let $\C{A}^*$ denote the largest intersecting subfamily of $\C{A}$; by definition, $|\C{A}^*| = \NN - \ell$. Since we have assumed that $\eps n \le r \le (1/2-\eps) n$, we may assume, by Theorem~\ref{friedgut}, that $|\C{A}^*_n| \ge \NN - C\ell$, where $C = C(\eps)$ is as guaranteed by Theorem~\ref{friedgut}. Hence, $|\C{A}_n| \ge |\C{A}^*_n| \ge \NN -C\ell$. 

We also know that $|\C{A}_n| \le |\C{A}^*| \le \NN - \ell$; let $\C{B}$ be a subset of $\C{A} \setminus \C{A}_n$ of cardinality exactly $\ell$. We shall bound $e(\C{A})$ by counting the number of edges between $\C{B}$ and $\C{A}_n$ in $K(n,r)$. 

Let us define
\[\C{A}' = \left\{ A \setminus \{n\}: A \in \C{A}_n \right\} \subset [n-1]^{(r-1)}\] 
and
\[\C{B}' = \left\{ [n-1] \setminus B: B\in \C{B} \right\} \subset [n-1]^{(n-r-1)}.\] 
Clearly, to count the number of edges between $\C{A}_n$ and $\C{B}$ in $K(n,r)$, it suffices to count the number of pairs $(A', B')$ in $\C{A}' \times \C{B}'$ with $A' \subset B'$. This quantity is obviously bounded below by the number of sets $A' \in \C{A}'$ contained in at least one $B' \in \C{B}'$.

Since $\C{A}' \subset [n-1]^{(r-1)}$ and $|\C{A}'| \ge \NN - C\ell$, the number of sets $A' \in \C{A}'$ contained in some $B' \in \C{B}'$ is at least $|\partial^{(r-1)}\C{B}'| - C\ell$. Consequently,
\[ e(\C{A}) \ge |\partial^{(r-1)}\C{B}'| - C\ell.\]
We shall show that there exists a $\delta = \delta(\eps, \eta) >0$ such that, under the conditions of the lemma, $|\partial^{(r-1)}\C{B}'| \ge \ell ^{1+\delta}$ for all sufficiently large $n \in \N$. We deduce the existence of such a $\delta$ from Theorem~\ref{lov}, the Kruskal--Katona theorem. We may assume that 
\[\ell = |\C{B}'| = \binom{x}{n-r-1}\] for some real number $x \ge n-r-1$. It follows from Theorem~\ref{lov} that 
\[|\partial^{(r-1)}\C{B}'| \ge \binom{x}{r-1}.\] Let us put $r = (1/2 - \beta) n$ and $x = \vartheta n$. We now calculate, ignoring error terms that are $o(1)$, what values $\beta$ and $\vartheta$ can take. We know that $\eps \le \beta \le 1/2 -\eps$. Since $x \ge n-r-1$, we also know that $\vartheta \ge 1/2 + \beta$. On the other hand, since 
\[ \binom{\vartheta n}{(1/2 + \beta)n} = \ell \le \NN^{1-\eta} = \binom{n-1}{r-1}^{1-\eta} \le \binom{n}{r}^{1-\eta}= \binom{n}{(1/2 - \beta)n}^{1-\eta},\]
it follows from Stirling's approximation for the factorial function that there exists some $\delta'(\eps, \eta)>0$ such that $\vartheta \le 1-\delta'$.

Hence, it suffices to check that there exists a $\delta = \delta(\eps, \eta) > 0$ for which the inequality
\[ \binom{\vartheta n}{ (1/2 -\beta) n} \ge \binom{\vartheta n}{(1/2+\beta) n }^{1 + \delta}\]
holds for all $\beta \in [\eps, 1/2 - \eps]$ and $\vartheta \in [1/2 + \beta, 1-\delta']$ as long as $n$ is sufficiently large. This is easily checked using Stirling's formula. 
\end{proof}

\section{Proof of the main result}\label{s:proof}
Armed with Lemmas~\ref{set-pairs} and~\ref{kk}, we are now ready to prove Theorem~\ref{t:main}.

\begin{proof}[Proof of Theorem~\ref{t:main}]
Let us fix $\eps>0$ and assume that $r \le (1/2 - \eps) n$. Clearly, it is enough to prove Theorem~\ref{t:main} for all sufficiently small $\eps$; it will be convenient to assume that $\eps < 1/10$. As mentioned earlier, Bollob\'as, Narayanan and Raigorodskii have proved Theorem~\ref{t:main} in a much stronger form when $r = o(n^{1/3})$. So to avoid having to distinguish too many cases, we shall assume that $r$ grows with $n$; for concreteness, let us suppose that $r \ge n^{1/4}$. A consequence of these assumptions is that in this range, $\VV$, $\NN$ and $\MM$ all grow much faster than any polynomial in $n$.

First, let $Y$ denote the (random) number of independent sets $\C{A}\subset \NR$ in $K_{p}(n,r)$ with $|\C{A}| = \NN+1$ and $\ell(\C{A}) = 1$; in other words, independent sets of size $\NN+1$ which contain an entire star. We begin by showing that there exists a $c' = c'(\eps)$ such that if $p \le \NN^{-c'}$, then $Y>0$ with high probability. Clearly,
\[\E[Y] =  \binom{n}{1}\binom{\VV-\NN}{1}(1-p)^{\MM}.\]
Note that if $r \le (1/2 - \eps)n$, then we may choose a suitably small $c' = c'(\eps)$ such that $\MM \ge \NN^{c'}$. It follows that if $c'$ is sufficiently small, then
\[
\E[Y] \ge  n(\VV-\NN)\exp{\left(-(p + p^2)\MM\right)} \ge  (e+o(1)) n(\VV-\NN),
\]
so $\E[Y] \to \infty$ when $p \le \NN^{-c'}$. 

Therefore, to show that $Y>0$ with high probability, it suffices to show that $\V [Y] = o(\E[Y]^2)$ or equivalently, that $\E[(Y)_2] = (1+o(1))\E[Y]^2$, where $\E[(Y)_2] = \E[ Y(Y-1)]$ is the second factorial moment of $Y$.

Writing $\C{S}_x$ for the star centred at $x$, we note that 
\[
\E[(Y)_2] = \sum_{a,b,A,B} \P \left( \C{S}_a \cup \{A\} \, \mbox{and}\, \C{S}_b \cup \{B\} \,\mbox{are independent} \right),\]
the sum being over ordered $4$-tuples $(a,b,A,B)$ with $a,b \in [n]$, $A \in \NR \setminus \C{S}_a$ and $B \in \NR \setminus \C{S}_b$ such that $(a,A) \neq (b, B)$. Now, observe that 
\begin{align*}
\sum_{a \neq b} \P \left( \C{S}_a \cup \{A\} \, \mbox{and}\, \C{S}_b \cup \{B\} \,\mbox{are independent} \right) &\le (n^2) (\VV-\NN)^2 (1-p)^{(2-o(1))\MM}\\
&= (1+o(1))\E[Y]^2,
\end{align*}
and
\begin{align*}
\sum_{a = b, A \neq B} \P \left( \C{S}_a \cup \{A\} \, \mbox{and}\, \C{S}_b \cup \{B\} \,\mbox{are independent} \right) &\le n(\VV-\NN)^2 (1-p)^{2\MM}\\
&= o(\E[Y]^2).
\end{align*}

By Chebyshev's inequality, we conclude that $Y>0$ with high probability, so the independence number of $K_p(n,r)$ is at least $\NN+1$ with high probability if $p \le \NN^{-c'}$.

Next, for each $\ell \ge 1$, let $X_\ell$ denote the (random) number of independent sets $\C{A}\subset \NR$ in $K_{p}(n,r)$ with $|\C{A}| = \NN$ and $\ell(\C{A}) = \ell$. To complete the proof of Theorem~\ref{t:main}, it clearly suffices to show that for some $c = c(\eps)>0$, all of the $X_\ell$ are zero with high probability provided $p \ge \NN^{-c}$. We shall prove this by distinguishing three cases depending on which of Theorem~\ref{HM}, Lemma~\ref{set-pairs} and Lemma~\ref{kk} is to be used.

Let $C = C(\eps)$ be as in Theorem~\ref{friedgut}. Note that since $r \le (1/2 - \eps)n$, it is easy to check using Stirling's approximation that we can choose positive constants $c_m=c_m(\eps)$ and $c_r=c_r(\eps)$ such that $\MM \ge \NN^{c_m}$ and $\RR \le \NN^{1-c_r}$. 

We now set $L_m = \NN^{c_m}/2$ and $L_r = \NN^{1-c_r/4}$ and distinguish the following three cases.

\textbf{Case 1: $\ell \le L_m$ .}
Let $\C{A} \subset \NR$ be a family of cardinality $\NN$ with $\ell(\C{A}) = \ell$. Since 
\[\ell \le L_m = \NN^{c_m}/2 \le \MM - 2,\] 
we see that $\C{A}^*$, the largest intersecting subfamily of $\C{A}$, satisfies 
\[ |\C{A}^*| = \NN - \ell \ge \NN - \MM + 2.\] 
It follows from Theorem~\ref{HM} that there is an $x \in [n]$ for which $\C{A}^*$ is contained in the star centred at $x$. Consider the $\ell$ sets in $\C{A}\setminus \C{A}^*$. Any such set is disjoint from exactly $\MM$ members of the star centred at $x$ and hence from at least $\MM - \ell$ members of $\C{A}^*$. This tells us that $e(\C{A}) \ge \ell(\MM - \ell)$. Since $\ell \le \MM/2$, it follows that 
\begin{align*}
\E[X_\ell] &\le n\binom{\NN }{\ell}\binom{\VV}{\ell}(1-p)^{\ell(\MM - \ell)}\\
&\le  n\binom{2^n }{\ell}^2\exp(-p\ell\MM /2)\\
&\le \exp(2n\ell - p\ell\MM/2).
\end{align*}

Hence, if $c \le c_m/2$ so that $p \ge \NN^{-c_m/2}$, it is clear that
\[ \sum_{\ell = 1}^{L_m} \E[X_\ell] \le \sum_{\ell = 1}^{L_m} \exp\left(2n\ell - \frac{\ell \NN^{c_m/2}}{2}\right) = o(1).\]
So with high probability, for each $1 \le \ell \le L_m$, the random variable $X_\ell$ is zero.

\textbf{Case 2: $\ell \ge L_r$.} Again, let $\C{A} \subset \NR$ be a family of cardinality $\NN$ with $\ell(\C{A}) = \ell$. We know from Lemma~\ref{set-pairs} that
\[ e(\C{A}) \ge \frac{\ell^2}{2\RR} \ge \frac{\NN^{2-c_r/2}}{2 \NN^{1-c_r}} = \frac{\NN^{1+c_r/2}}{2}.\]
So it follows that
\[ \sum_{l\ge L_r} \E[X_\ell] \le \binom{\VV}{\NN}\exp\left(-p\frac{\NN^{1+c_r/2}}{2}\right) \le \exp\left(n\NN -p\frac{\NN^{1+c_r/2}}{2}\right).\]
Hence, if $c \le c_r/4$ so that $p \ge \NN^{-c_r/4}$, we have
\[ \sum_{l \ge L_r} \E[X_\ell] \le \exp\left(n\NN - \frac{\NN^{1+c_r/4}}{2}\right) = o(1).\]
So once again, with high probability, the sum $\sum_{\ell \ge L_r} X_\ell$ is zero.

Before we proceed further, let us first show that that we may now assume without loss of generality that $r \ge \eps n$. This is because one can check that the arguments in Cases 1 and 2 together prove Theorem~\ref{t:main} when $r \le \eps n$ for all sufficiently small $\eps$. It is easy to check using Stirling's formula that if $\eps$ is sufficiently small, indeed, if $\eps < 1/10$ for example, then it is possible to choose positive constants $c'_m(\eps)$ and $c'_r(\eps)$ so that for all $r \le \eps n$, we have $\MM \ge \NN^{c'_m}$, $\RR \le \NN^{1-c'_r}$ and $\NN^{c'_m}/2 \ge \NN^{1-c'_r/4}$. So the arguments above yield a proof of Theorem~\ref{t:main} when $r \le \eps n$. Therefore, in the following, we assume that $r \ge \eps n$.

\textbf{Case 3: $L_m \le \ell \le L_r$.}
As before, consider any family $\C{A} \subset \NR$ of cardinality $\NN$ with $\ell(\C{A}) = \ell$. First note that since $\eps n \le r \le (1/2 -\eps n)$ and $\ell \le L_r = \NN^{1-c_r/4}$ where $c_r$ is a constant depending only on $\eps$, by Lemma~\ref{kk}, there exists a $\delta = \delta(\eps)$ such that 
\[e(\C{A}) \ge \ell^{1+\delta} - C\ell. \]
Since $\ell \ge L_m = \NN^{c_m}/2$, it follows that
\[e(\C{A}) \ge \ell^{1+\delta} - C\ell \ge \ell^{1+\delta/2}\]
for all sufficiently large $n$.

Next, consider $\C{A}^*$, the largest intersecting subfamily of $\C{A}$, which has cardinality $\NN - \ell$. We know from Theorem~\ref{friedgut} that there exists an $x \in [n]$ such that $|\C{A}^*_x| \ge \NN - C\ell$, so $|\C{A}_x| \ge \NN - C\ell$. It is then easy to see that
\begin{align*} 
\E[X_\ell] &\le n \binom{\NN}{C\ell}\binom{\VV}{C\ell}(1-p)^{\ell^{1+\delta/2}}\\
&\le \exp\left(\ell\left(2Cn - p\ell^{\delta/2}\right)\right).  
\end{align*}

Hence, if $c \le c_m\delta/4$ so that $p \ge \NN^{-c_m\delta/4}$, it follows that 
\[ \sum_{\ell = L_m}^{L_r} \E[X_\ell] \le \sum_{\ell = L_m}^{L_r} \exp\left(\ell\left(2Cn - \NN^{c_m\delta/4}/2\right)\right) = o(1), \]
so with high probability, for each $L_m \le \ell \le L_r$, the random variable $X_\ell$ is zero.

Putting the different parts of our argument together, we find that if $0 < \eps < 1/10$,
\[
c = c(\eps) = \min \left(\frac{c_m(\eps)}{2},\frac{c'_m(\eps)}{2},\frac{c_r(\eps)}{4},\frac{c'_r(\eps)}{4},\frac{c_m(\eps)\delta(\eps)}{2} \right)
\] 
and $p \ge \NN^{-c}$, then for all $r = r(n) \le (1/2 -\eps)n$, we have
\[
\P\left(\alpha\left(K_p(n,r)\right) = \binom{n-1}{r-1}\right) \to 1
\]
as $n \to \infty$. This completes the proof of Theorem~\ref{t:main}. 
\end{proof}

\section{Refinements}\label{s:cont}
We briefly discuss how one might tighten up the arguments in Theorem~\ref{t:main} so as to improve the dependence of $c(\eps)$ on $\eps$ in the result. However, since it seems unlikely to us that these methods will be sufficient to determine the precise critical threshold at which Theorem~\ref{t:main} ceases to hold, we shall keep the discussion in this section largely informal.

\subsection{Containers for sparse sets in the Kneser graph}
The first approach we sketch involves using ideas from the theory of `graph containers' to count large sparse sets in the Kneser graph more efficiently. 

The theory of graph containers was originally developed to efficiently count the number of independent sets in a graph satisfying some kind of `supersaturation' condition. The basic principle used to construct containers for graphs can be traced back to the work of Kleitman and Winston~\citep{kleit}. A great deal of work has since gone into refining and generalising their ideas, culminating in the results of Balogh, Morris and Samotij~\citep{cont1} and Saxton and Thomason~\citep{cont2}; these papers also give a detailed account of the history behind these ideas and we refer the interested reader there for details about how the general methodology was developed. Here we shall content ourselves with a brief discussion of how these ideas might be used to improve the dependence of $c(\eps)$ on $\eps$ in Theorem~\ref{t:main}.

Let us write $Y_m=Y_m(n,r)$ for the number of families $\C{A} \subset \NR$ with $|\C{A}| = \NN$ and $e(\C{A})=m$. Clearly, to show that $\alpha(K_p(n,r)) = \NN$ with high probability, it suffices to show that $\sum_{m\ge1} Y_m (1-p)^m = o(1)$. Hence, it would be useful to have good estimates for $Y_m$. We shall derive some bounds for $Y_m$; see Theorem~\ref{babycont} below. These bounds are not strong enough (especially for small values of $m$) to prove Theorem~\ref{t:main}. However, note that in our proof of Theorem~\ref{t:main}, we use the somewhat cavalier bound of $\binom{\VV}{\NN}$ for the number of families $\C{A}$ of size $\NN$ for which $\ell(\C{A})$ is equal to some prescribed value (in Case~2 of the proof); we can instead use Theorem~\ref{babycont} to count more efficiently.

To prove an effective container theorem, one needs to first establish a suitable supersaturation property. Lov\'asz~\citep{shannon} determined the second largest eigenvalue of the Kneser graph; by combining Lov\'asz's result with the expander mixing lemma, Balogh, Das, Delcout, Liu and Sharifzadeh~\citep{typical} proved the following supersaturation theorem for the Kneser graph.

\begin{proposition}\label{supersat}
Let $n,r,k\in\N$ and suppose that $n > 2r$ and $k \le \VV - \NN$. If $\C{A} \subset \NR$ has cardinality $\NN + k$, then $e(\C{A}) \ge k\MM/2$.\qed
\end{proposition}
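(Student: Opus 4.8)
The plan is to deduce Proposition~\ref{supersat} directly from the expander mixing lemma applied to $K(n,r)$, using Lov\'asz's eigenvalue computation~\citep{shannon} as the only external input. First I would record that $K(n,r)$ is $d$-regular with $d = \binom{n-r}{r}$ and that, by Lov\'asz~\citep{shannon}, its eigenvalues are $(-1)^{i}\binom{n-r-i}{r-i}$ for $i = 0, 1, \dots, r$. Since $n > 2r$, a one-line check shows that the quantities $\binom{n-r-i}{r-i}$ are strictly decreasing in $i$, so among the negative eigenvalues the one of largest absolute value is the $i = 1$ term; hence the least eigenvalue of $K(n,r)$ is exactly $-\binom{n-r-1}{r-1} = -\MM$.

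Next I would invoke the expander mixing lemma in the form that uses only the least eigenvalue. Writing $A$ for the adjacency matrix of $K(n,r)$ and splitting the indicator vector $\mathbf{1}_{\C{A}}$ of $\C{A}$ into its projection onto the all-ones eigenvector and an orthogonal part $v$, for which $\|v\|^{2} = |\C{A}|\left(1 - |\C{A}|/\VV\right)$, one has $2e(\C{A}) = \mathbf{1}_{\C{A}}^{T} A \mathbf{1}_{\C{A}} = \frac{d\,|\C{A}|^{2}}{\VV} + v^{T} A v \ge \frac{d\,|\C{A}|^{2}}{\VV} - \MM\,\|v\|^{2}$. Setting $|\C{A}| = \NN + k$, this yields
\[ e(\C{A}) \ge \frac{d\,(\NN+k)^{2}}{2\VV} - \frac{\MM}{2}(\NN+k)\left(1 - \frac{\NN+k}{\VV}\right); \]
note that the hypothesis $k \le \VV - \NN$ is precisely what makes $\NN + k \le \VV$, so that $\C{A}$ is a genuine vertex subset and the last factor is nonnegative.

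The remaining step is a short binomial-coefficient computation. From $\binom{n-r}{r} = \frac{n-r}{r}\binom{n-r-1}{r-1}$ one gets $d + \MM = \frac{n}{r}\MM$, and from $\binom{n}{r} = \frac{n}{r}\binom{n-1}{r-1}$ one gets $\frac{n}{r\VV} = \frac{1}{\NN}$. Substituting these into the displayed bound, the right-hand side collapses to $\frac{\MM}{2}\bigl(\frac{(\NN+k)^{2}}{\NN} - (\NN + k)\bigr) = \frac{\MM}{2}\bigl(k + \frac{k^{2}}{\NN}\bigr) \ge \frac{k\MM}{2}$, which is the assertion.

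I do not anticipate a genuine obstacle: once the spectrum is in hand, everything is forced. The two points that need a little care are (i) confirming that the extremal negative eigenvalue is precisely $-\MM$ rather than some $i \ge 3$ term — immediate from the strict monotonicity of $\binom{n-r-i}{r-i}$ in $i$ when $n > 2r$ — and (ii) tracking the two identities above so that the error term of the mixing lemma cancels against the main term down to exactly $k\MM/2$. That it cancels this cleanly is really a reflection of the fact that the Hoffman ratio bound $\alpha(K(n,r)) \le \NN$ is tight.
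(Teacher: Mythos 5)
Your proof is correct, and it follows exactly the route the paper attributes to Balogh, Das, Delcourt, Liu and Sharifzadeh: Lov\'asz's computation of the Kneser spectrum combined with the expander mixing (ratio-bound) inequality, and the binomial identities $d + \MM = \tfrac{n}{r}\MM$ and $\VV = \tfrac{n}{r}\NN$ make the error term cancel to precisely $k\MM/2$. The paper itself states the proposition with only a pointer to that reference, so your argument supplies the same proof the authors had in mind.
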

Using Proposition~\ref{supersat}, we prove the following container theorem for the Kneser graph.

\begin{theorem}\label{babycont}
For every $\eps > 0$, there exists a ${\hat C} = {\hat C}(\eps)>0$ such that for every $\beta > 0$ and all $n, r, m \in \N$ with $\eps n \le r \le (1/2 - \eps)n$, the following holds: writing 
\[k_1 = {\hat C} \left(\frac{\NN}{\beta \MM} +  \left(\frac{m\NN }{\beta \MM} \right)^{1/2}\right) \]
and 
\[ k_2 = k_1 + {\hat C} \beta\NN ,
\]
there exist, for $1 \le i \le \sum_{j=0}^{k_1}\binom{\VV}{j}$, families $\C{B}_i \subset \NR$ each of cardinality at most $\NN + k_2$ with the property that each $\C{A} \subset \NR$ with $e(\C{A}) \le m$ is contained in one of these families.
\end{theorem}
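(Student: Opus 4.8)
The plan is a Kleitman--Winston style algorithmic container argument, driven by the supersaturation in Proposition~\ref{supersat}. One first disposes of the degenerate ranges of $\beta$: if $\NN+k_2\ge\VV$ then $\NR$ itself is a container of the required size and there is nothing to do, and a short computation shows this holds unless $\beta$ lies in an interval $(c_0/\MM,1)$ for a suitable $c_0=c_0(\eps)$; so one may assume $\beta<1$ and, in particular, $q:=\beta\MM/2>1$. The key consequence of Proposition~\ref{supersat} is that if $W\subseteq V=\NR$ has $|W|=\NN+k$ with $k>\beta\NN$, then $K(n,r)[W]$ has at least $k\MM/2$ edges, hence average degree at least $k\MM/(\NN+k)\ge q$, hence a vertex of degree $\ge q$; consequently the peeling process below cannot stop while its surviving set exceeds $(1+\beta)\NN$ in size.

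Fix a linear order on $V$ and put $h=\max\{1,(mq/\VV)^{1/2}\}$. Given a family $\C{A}$ with $e(\C{A})\le m$, run the following process, maintaining a surviving set $\C{D}$ (initially $V$) and three record sets. While $|\C{D}|>(1+\beta)\NN$, let $v$ be a vertex of $\C{D}$ of largest degree in $K(n,r)[\C{D}]$, ties broken by the order. If $v\notin\C{A}$, delete $v$ from $\C{D}$. If $v\in\C{A}$ and $v$ has at most $h$ neighbours in $\C{A}\cap\C{D}$, add $v$ to the \emph{accepted} record, delete $v$ together with all its neighbours in $\C{D}$, and add those of the deleted neighbours that lie in $\C{A}$ (at most $h$ of them) to the \emph{lost} record. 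If $v\in\C{A}$ and $v$ has more than $h$ neighbours in $\C{A}\cap\C{D}$, add $v$ to the \emph{bad} record and delete only $v$. By the remark above the process ends with $|\C{D}|\le(1+\beta)\NN$, and by inspection every vertex of $\C{A}$ ends up in one of the three records or in the final $\C{D}$. For the sizes: each accepted vertex deletes more than $q$ vertices, so fewer than $\VV/q$ are accepted; one can attach to distinct bad vertices disjoint sets of more than $h$ edges of $\C{A}$ apiece, so fewer than $m/h$ are bad; and the lost record has size at most $h$ times the number of accepted vertices. Since $q=\beta\MM/2$, $h=\max\{1,(m\beta\MM/2\VV)^{1/2}\}$ and $\VV=(n/r)\NN\le\NN/\eps$, the total size of the record is $O_\eps\bigl(\NN/(\beta\MM)+(m\NN/(\beta\MM))^{1/2}\bigr)$, which is at most $k_1$ provided $\hat C=\hat C(\eps)$ is large enough.

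Finally I would check that $\C{A}$ lies inside a family determined by its record: given the three records, replay the process, each step's largest-degree vertex being forced, its treatment read off from which record it lies in (a vertex of the lost record never appears as a largest-degree vertex, being deleted as a neighbour first); thus the final set $\C{D}_{\text{final}}$, and hence the family $\C{B}:=\C{D}_{\text{final}}\cup(\text{record})$, depends only on the record. Such a family has size at most $(1+\beta)\NN+k_1\le\NN+k_2$; and the records range over at most $3^{k_1}\sum_{j=0}^{k_1}\binom{\VV}{j}$ choices, the spurious factor $3^{k_1}$ being absorbed into the bound by a further enlargement of $\hat C$ (using that $\VV$ is super-polynomially large in the stated range). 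The step I expect to be the main obstacle is the calibration of the auxiliary threshold $h$: a large $h$ inflates the lost record and a small $h$ inflates the bad record, and it is exactly this balance that yields the square-root term $(m\NN/(\beta\MM))^{1/2}$ in $k_1$ rather than the crude $m$ (which would only be adequate when $m\lesssim\NN/(\beta\MM)$). A close second is the bookkeeping ensuring that no vertex of $\C{A}$ is ever quietly discarded, so that $\C{A}$ genuinely lies \emph{inside} a container rather than merely near one.
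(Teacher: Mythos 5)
Your proposal is correct and runs along a genuinely different route from the paper's, though both are graph-container arguments built on the same supersaturation input, Proposition~\ref{supersat}. The paper first proves a self-contained, Saxton--Thomason-style container lemma (Lemma~\ref{ST}): scan the vertex set once in a \emph{fixed} linear order, classify each vertex by counting its backward neighbours in the fingerprint and its forward neighbours outside $\Gamma(T)$, and output a \emph{single} fingerprint $T\subset U$ together with a container $\Scr{C}(T)$ determined by $T$ alone; supersaturation is then invoked only once at the very end, to translate the bound on $e(\Scr{C}(\C{T}))$ into the cardinality bound $|\Scr{C}(\C{T})|\le \NN+k_2$. Your argument is instead a Kleitman--Winston-style \emph{maximum-degree peeling} that uses supersaturation continuously: it guarantees that the maximum degree in the surviving set $\C{D}$ stays at least $q=\beta\MM/2$ so long as $|\C{D}|>(1+\beta)\NN$, and you classify each selected vertex of $\C{A}$ by the threshold $h$. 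Your threshold $h\approx(mq/\VV)^{1/2}$ plays the same balancing role as the paper's choice $k\approx(abd)^{1/2}$ and cut-off $bd$ in Lemma~\ref{ST}, and both give the same quantitative bound on the fingerprint size. The one place your version pays a small price is that the replayable record is the ordered triple (accepted, bad, lost) rather than a single set, introducing a $3^{|T|}$ factor into the fingerprint count. You flag this and propose absorbing it by enlarging $\hat C$; that does work, but the absorption needs a short case distinction rather than just \enquote{$\VV$ is super-polynomially large}: for $k_1$ small compared with $\VV$ a constant-factor inflation of $\hat C$ suffices, while when $k_1$ becomes comparable to $\VV$ one should observe that $k_2>k_1$ then forces $\NN+k_2>\VV$, so $\NR$ itself is an admissible container and the statement is trivial. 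The paper's single-fingerprint formulation sidesteps this bookkeeping entirely. Everything else in your argument --- the verification that lost vertices never surface as max-degree vertices, the disjointness of the edge sets witnessing bad vertices, and the size bound $|\C{D}_{\text{final}}\cup(\text{record})|\le(1+\beta)\NN+k_1\le\NN+k_2$ --- is sound.
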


The advantage of this formulation of Theorem~\ref{babycont} in terms of $k_1$, $k_2$ and $\beta$  is that we can apply the theorem with a value of $\beta>0$ suitably chosen for the application at hand. 

It is easy to check from Theorem~\ref{babycont} that $Y_m=Y_m(n,r)$, the number of families $\C{A} \subset \NR$ with $|\C{A}| = \NN$ and $e(\C{A})=m$, satisfies
\begin{align*}
Y_m(n,r) &\le \left(\sum_{j=0}^{k_1}\binom{\VV}{j}\right) \binom{\NN + k_2}{\NN} = 2\binom{\VV}{k_1} \binom{\NN+k_2}{k_2} \le 2\binom{\VV}{k_1} \binom{\VV}{k_2}\\
&\le 2\exp\left({\hat C}n \left(\beta\NN + \frac{2\NN}{\beta \MM} + \left(\frac{4m\NN }{\beta \MM} \right)^{1/2}\right)\right)\end{align*}  
for all $\beta > 0$ such that $k_1 < \VV/3$. We can then optimise this bound by choosing $\beta$ depending on how large $m$ is in comparison to $\MM$ and $\NN$. For example, when $m \ge \NN / \MM^{1/2}$, we can take $\beta = (m/\NN \MM)^{1/3}$ and easily check that $ Y_m(n,r) \le \exp (10{\hat C}n(m\NN^2/\MM)^{1/3})$. The reader may check that this estimate for $Y_m$ when combined with the Hilton--Milner theorem is sufficient to prove Theorem~\ref{t:main} when $r/n$ is bounded above by and away from $\vartheta$, where $\vartheta \approx 0.362$ is, writing $H(x) = - x \log x -(1-x)\log(1-x)$, the unique real solution to the equation
\[ 3(1-\vartheta)H\left(\frac{\vartheta}{1-\vartheta}\right) = 2H(\vartheta)\]
in the interval $(0,1)$.

\begin{proof}[Proof of Theorem~\ref{babycont}]
We start by proving a lemma whose proof is loosely based on the methods of Saxton and Thomason~\citep{cont2}. Before we state the lemma, let us have some notation. Given a graph $G=(V,E)$ and $U \subset V(G)$, we write 
\[ \mu (U) = \frac{|E(G[U])|}{|V|}; \] 
in other words, $\mu(U)$ is the number of edges induced by $U$ divided by the number of vertices of $G$. Also, we write $\C{P}(X)$ for the collection of all subsets of a set $X$.

\begin{lemma}\label{ST}
Let $G=(V,E)$ be a graph with average degree $d$ and maximum degree $\Delta$. For every $a \ge 0$ and $b > 0$, there is a map $\Scr{C} : \C{P}(V)\to \C{P}(V)$ with the following property: for every $U \subset V$ with $\mu(U) \le a$, there is a subset $T \subset V$ such that
\begin{enumerate}
\item $T \subset U \subset \Scr{C}(T)$,
\item $|T| \le 2|V|(a/b d)^{1/2} + |V|/b d$, and 
\item $\mu(\Scr{C}(T))\le 2\Delta(a/b d)^{1/2} + \Delta/b d + b d$.
\end{enumerate}
\end{lemma}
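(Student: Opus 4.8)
The plan is to adapt the standard graph container algorithm, in the form used by Saxton and Thomason, to the setting of sparse sets. First I would fix, once and for all, a linear order on $V$. Given a set $U \subseteq V$ with $\mu(U) \le a$, I would run the following deterministic procedure, maintaining an ``available set'' $\mathbf{A}$ (initially $V$) and a ``fingerprint'' $T$ (initially empty). While $G[\mathbf{A}]$ has maximum degree exceeding $bd$, let $v$ be the first vertex of $\mathbf{A}$ in the fixed order attaining the maximum degree of $G[\mathbf{A}]$; if $v \notin U$, delete $v$ from $\mathbf{A}$, and if $v \in U$, add $v$ to $T$, delete from $\mathbf{A}$ both $v$ and all its neighbours in $G[\mathbf{A}]$, and add to $T$ each of those neighbours that happens to lie in $U$ (the ``collateral'' of this step). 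When the loop halts, define $\Scr{C}(T)$ to be the union of $T$ with the final available set.

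I would then check that $\Scr{C}$ depends only on its argument and not on $U$: at each step the examined vertex $v$ is determined by the current $\mathbf{A}$ alone, and --- since an examined vertex leaves $\mathbf{A}$ permanently --- whether $v$ is deleted outright or selected is recorded faithfully by whether $v \in T$; hence the whole run, and with it the final available set, can be reconstructed from $T$. Since $T \subseteq U$ by construction, for~(1) it remains to note $U \subseteq \Scr{C}(T)$, which holds because every vertex of $U$ that leaves $\mathbf{A}$ does so either as a selected vertex or as collateral, and in either case lands in $T$, while the rest of $U$ stays in the final available set. For~(3), writing $\Scr{C}(T) = T \cup \mathbf{A}$ with $\mathbf{A}$ the final available set, the halting condition gives $\Delta(G[\mathbf{A}]) \le bd$, so $G[\mathbf{A}]$ spans at most $bd|V|/2$ edges; every other edge of $G[\Scr{C}(T)]$ meets $T$, and there are at most $\Delta|T|$ of those, so $\mu(\Scr{C}(T)) \le bd/2 + \Delta|T|/|V|$, and~(3) follows from~(2).

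So the real content is the bound~(2) on $|T|$. The number of selected vertices is less than $|V|/bd$, since the closed neighbourhoods deleted at distinct selections are pairwise disjoint and each has more than $bd$ vertices (the selected vertex has maximum degree, which at that point exceeds $bd$). The delicate part is bounding the collateral. A crude estimate charges each collateral vertex to the edge joining it to the vertex that removed it; this edge lies in $G[U]$ and distinct collateral vertices receive distinct edges, so there are at most $e(G[U]) = \mu(U)|V| \le a|V|$ of them --- and this already yields~(2), and hence~(3), whenever $a \le 4/bd$. The hard part will be the complementary range, where $a|V|$ is too weak and one must instead reach the geometric-mean bound $2|V|(a/bd)^{1/2}$.

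To handle that range I would refine the algorithm so that it does \emph{not} prune the whole neighbourhood of a selected vertex of very large current degree: such a ``heavy'' selection is still recorded in $T$ but deletes only the selected vertex, hence produces no collateral, and one bounds the number of heavy selections by $\sum_{v} \deg(v)$ divided by the chosen degree threshold, while the ordinary selections generate collateral bounded both by $a|V|$ and by a quantity decreasing in that threshold; optimising the threshold against these two bounds produces the $(a/bd)^{1/2}$ term. Pinning down this threshold so that the constants in~(2) and~(3) come out exactly as stated, together with the easy degenerate cases (for instance when $bd$ is at least the maximum degree, or when $b \ge 1/2$, where one may simply take $T = \emptyset$ and $\Scr{C} \equiv V$), is where the bookkeeping lies; I expect this balancing of fingerprint size against container quality to be the step that most needs care.
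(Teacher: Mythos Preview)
Your basic Kleitman--Winston scheme and the reconstructibility argument are sound, and the crude bound $|T| \le |V|/(bd) + a|V|$ does cover the range $a \le 4/(bd)$. The gap is in the complementary range: the ``heavy selection'' refinement you describe does not produce the geometric-mean term. With a degree threshold $D>bd$, the heavy selections number at most $d|V|/D$, the ordinary selections at most $|V|/(bd)$, and the collateral is at most $\min\bigl(a|V|,\,D|V|/(bd)\bigr)$, the second bound coming from ``(degree $\le D$) $\times$ (number of ordinary selections)''. This is \emph{increasing} in $D$, not decreasing as you assert; sending $D\to\infty$ simply recovers $|T|\le |V|/(bd)+a|V|$, which exceeds $2|V|(a/bd)^{1/2}+|V|/(bd)$ whenever $a>4/(bd)$, while balancing $d|V|/D$ against $D|V|/(bd)$ gives $2|V|/\sqrt{b}$, which is independent of $a$. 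No choice of $D$ yields the $(a/bd)^{1/2}$ factor, so the issue is not bookkeeping but the mechanism itself: in a max-degree-and-prune algorithm the collateral is tied to $e(G[U])$ with coefficient $1$, and a degree cutoff on the selected vertex does nothing to shrink that coefficient.

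The paper's device is different in kind. It makes a single pass through $V$ in a fixed linear order and uses a threshold $k=\lceil (abd)^{1/2}\rceil$ on the number of \emph{backward neighbours already placed in $T$}, not on the degree in the available set. A vertex is removed from the available set (and added to $T$ when it lies in $U$) either because it already has $k$ backward neighbours in $T$, or because it still has at least $bd$ forward neighbours that do not yet have $k$ backward neighbours in $T$. Vertices of the first type each contribute $k$ distinct edges of $G[U]$, so there are at most $a|V|/k$ of them; a marked-edges count shows there are at most $k|V|/(bd)$ of the second type. Choosing $k\approx(abd)^{1/2}$ balances these two to give~(2). There is no collateral at all: every $U$-vertex that leaves the available set is itself examined and placed in $T$, rather than being swept out as part of a neighbour's neighbourhood. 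It is precisely this ``threshold on degree into $T$'' that lets one divide $e(G[U])$ by $k$ instead of by $1$, and that idea is absent from your sketch.
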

\begin{proof}
We shall describe an algorithm that constructs $T$ given $U$. The algorithm will also construct $\Scr{C}(T)$ in parallel; it will be clear from the algorithm that $\Scr{C}(T)$ is entirely determined by $T$ and in no way depends on $U$.

Fix a linear ordering of the vertex set $V$ of $G$. If $u$ and $v$ are adjacent and $u$ precedes $v$ in our ordering, we call $v$ a \emph{forward neighbour} of $u$ and $u$ a \emph{backward neighbour} of $v$. For a vertex $v \in V$, we write $F(v)$ for the set of its forward neighbours. 

We begin by setting $T = \emptyset$ and $A = V$. We shall iterate through $V$ in the order we have fixed and add vertices to $T$ and remove vertices from $A$ as we go along; at any stage, we write $\Gamma(T)$ to denote the set of those vertices which, at that stage, have $k$ or more backward neighbours in $T$ where $k$ is the least integer strictly greater than $(abd)^{1/2}$.

As we iterate through the vertices of $V$ in order, we do the following when considering a vertex $v$.
\begin{enumerate}
\item\label{one} If $v \in \Gamma(T)$, we remove $v$ from $A$; if it is also the case that $v \in U$, then we add $v$ to $T$. 
\item If $v \notin \Gamma(T)$, we consider the size of $S = F(v) \setminus \Gamma(T)$. 
\begin{enumerate}
\item\label{two} If $|S| \ge b d$, we remove $v$ from $A$; if it is also the case that $v \in U$, then we add $v$ to $T$. 
\item If $|S| < b d$, we do nothing.
\end{enumerate}
\end{enumerate}

The algorithm outputs $T$ and $A$ when it terminates; we then set $\Scr{C}(T) = A \cup T$. It is clear from the algorithm that $\Scr{C}(T)$ is uniquely determined by $T$ and that $T \subset U \subset \Scr{C}(T)$.

We first show that $|T| \le 2|V|(a/b d)^{1/2} + |V|/b d$. Consider the partition $T = T_1 \cup T_2$ where $T_1$ consists of those vertices which were added to $T$ on account of condition~(\ref{one}) and $T_2$ of those vertices which were added to $T$ when considering condition~(\ref{two}). The upper bound for $|T|$ follows from the following two claims.

\begin{claim}
$|T_1| \le |E(G[U])|/k$.
\end{claim}
\begin{proof}
Clearly, each vertex of $T_1$ has at least $k$ backward neighbours in $T \subset U$. Hence, $k|T_1| \le  |E(G[U])|$.
\end{proof}

\begin{claim}
$|T_2| \le k|V|/b d$.
\end{claim}
\begin{proof}
Let us mark all the edges from $v$ to $F(v) \setminus \Gamma(T)$ when a vertex $v$ gets added to $T$ on account of condition~(\ref{two}). The number of marked edges is clearly at least $bd|T_2|$. On the other hand, by the definition of $\Gamma(T)$, each vertex is joined to at most $k$ of its backward neighbours by a marked edge. Hence, $bd|T_2| \le k|V|$.
\end{proof}

Consequently, since $(abd)^{1/2} < k \le (abd)^{1/2} + 1$, we have
\begin{align*}
|T| &= |T_1| + |T_2| \le \frac{a|V|}{k} + \frac{k|V|}{bd} \\
&\le \frac{a|V|}{(abd)^{1/2}} + \frac{((abd)^{1/2} + 1)|V|}{bd} \le 2|V|\left(\frac{a}{b d}\right)^{1/2} + \frac{|V|}{b d}.
\end{align*}
It remains to show that $\mu(\Scr{C})\le 2\Delta(a/b d)^{1/2} + \Delta/b d + b d$. To see this, recall that $\Scr{C}(T) = A \cup T$ and notice that 
\[ |E(G[\Scr{C}(T)])| \le \Delta|T| + |E(G[A])| \le \Delta|T| + bd|V|. \]
To see the last inequality, i.e., $|E(G[A])| \le bd|V|$, note that a vertex $v$ is removed from $A$ by our algorithm unless we have $|F(v) \setminus \Gamma(T)| < bd$ at the stage where we consider $v$. Since each member of $\Gamma(T)$ is (eventually) removed from $A$, we see that each vertex of $A$ has at most $bd$ forward neighbours in $A$ and the inequality follows. The claimed bound for $\mu(\Scr{C})$ then follows from our previously established upper bound for $|T|$.
\end{proof}

To prove Theorem~\ref{babycont}, we now combine Lemma~\ref{ST} with Proposition~\ref{supersat}. First note that the Kneser graph $K(n,r)$ has $\VV = n\NN/r$ vertices and is $(n-r)\MM/r$ regular. 

Let us take ${\hat C}(\eps) = 20/\eps^2$. It is easy to check that given $\beta > 0$ and a family $\C{A} \subset \NR$ with $e(\C{A}) \le m$, we can apply Lemma~\ref{ST} with $a = m/\VV$ and $b = \beta$ to get families $\C{T} \subset \NR$ and $\Scr{C}(\C{T}) \subset \NR$ such that $\C{T} \subset \C{A} \subset \Scr{C}(\C{T})$, $|\C{T}| \le k_1$ and $e(\Scr{C}(\C{T})) \le  k_2 \MM/2$. Hence, by Proposition~\ref{supersat}, we see that $|\Scr{C}(\C{T})| \le \NN +  k_2$. The theorem then follows by taking the families $\Scr{C}(\C{T})$ for every $\C{T} \subset \NR$ with $|\C{T}| \le k_1$.
\end{proof}

\subsection{Stability for the Kruskal--Katona theorem}
An important ingredient in our proof of Theorem~\ref{t:main} is Lemma~\ref{kk} which gives a uniform lower bound, using Theorem~\ref{friedgut} and the Kruskal--Katona theorem, for $e(\C{A})$ in terms of $\ell(\C{A})$ when the size of $\C{A}$ is large. 

However, there is a price to be paid for proving such a uniform bound: the bound is quite poor for most families to which the lemma can be applied. Indeed, the families which are extremal for the argument in the proof of Lemma~\ref{kk} must possess a great deal of structure. Instead of the Kruskal--Katona theorem, one should be able to use a stability version of the Kruskal--Katona theorem, as proved by Keevash~\citep{Keevash} for example, to prove a more general result that accounts for the structure of the family under consideration.

\section{Conclusion}\label{s:conc}
Several problems related to the question considered here remain. First of all, it would be good to determine the largest possible value of $c(\eps)$ with which Theorem~\ref{t:main} holds. It is likely that one needs new ideas to resolve this problem. 

Second, one would also like to know what happens when $r$ is very close to $n/2$. Perhaps most interesting is the case when $n = 2r+1$; one would like to know the values of $p$ for which we have $\alpha(K_p(2r+1,r))=\binom{2r}{r-1}$ with high probability. A simple calculation shows that $p = 3/4$ is the threshold at which we are likely to find a star and an $r$-set not in the star all the edges between which are missing in $K_p(2r+1,r)$ which suggests that the critical threshold should be $3/4$. However, it would even be interesting to show that $\alpha(K_p(2r+1,r))=\binom{2r}{r-1}$ with high probability for, say, all $p \ge 0.999$.

\section*{Acknowledgements}
The first author is partially supported by a Simons fellowship, NSF CAREER grant DMS-0745185, Arnold O. Beckman Research Award (UIUC Campus Research Board 13039) and Marie Curie grant FP7-PEOPLE-2012-IIF 327763. The second author would like to acknowledge support from EU MULTIPLEX grant 317532 and NSF grant DMS-1301614. 

Some of the research in this paper was carried out while the authors were visitors at the Alfr\'ed R\'enyi Institute of Mathematics. This research was continued while the third author was a visitor at the University of Memphis. The authors are grateful for the hospitality of the R\'enyi Institute and the third author is additionally grateful for the hospitality of the University of Memphis.

We would also like to thank Andrew Thomason for some helpful discussions about graph containers.

\bibliographystyle{amsplain}
\bibliography{ekr_transference}

\end{document}